\theoremstyle{plain}
\newtheorem{thm}{Theorem}[section]
\newtheorem{lem}[thm]{Lemma}
\newtheorem{pro}[thm]{Proposition}
\newtheorem{cor}[thm]{Corollary}
\theoremstyle{definition}
\newtheorem{defn}[thm]{Definition}
\newtheorem{rem}[thm]{Remark}
\newcommand{\pr} {\mathrm{pr}}
\begin{document}

\title{
       Partial cubes with pre-hull number at most $1$}
\author{Norbert Polat\\
        I.A.E., Universit\'{e} Jean Moulin (Lyon 3)\\
         6 cours Albert Thomas\\
         69355 Lyon Cedex 08, France\\
         \texttt{norbert.polat@univ-lyon3.fr}}
\date{}		
\maketitle

\begin{abstract}
We prove that a connected bipartite graph $G$ is a partial cube if and only if the set of attaching points of any copoint of $G$ is convex.  A consequence of this result is that any connected bipartite graph with pre-hull number at most $1$ is a partial cube.  We show that the class of partial cubes with pre-hull number at most $1$ is closed under gated subgraphs, gated amalgams and cartesian products.\\

\emph{Keywords:} Geodesic convexity; Copoint; Attaching point; Pre-hull number;  Bipartite graph; Partial cube; Median graph, Netlike partial cube.

\end{abstract}

\section{Introduction}\label{S:introd.}

The (geodesic) pre-hull number $ph(G)$ of a graph $G$ is a parameter which measures the intrinsic non-convexity of $V(G)$ in terms of the number of iterations of the pre-hull operator associated with the interval operator $I_G$ which are necessary, in the worst case, to reach the canonical minimal convex extension of copoints of $V(G)$ when they are extended by the adjunction of an attaching point.  In~\cite{PS09}, where this concept was introduced, the question whether any connected bipartite graph with pre-hull number at most $1$ is a partial cube was considered, but only partial results were obtained \cite[Sections 6 and 7]{PS09}.  Note that a connected bipartite graph with pre-hull number greater than $1$ may or may not be a partial cube.  The first part of the present paper deals with the research of a definitive answer to this question.

In~\cite{PS09} we proved that, for any copoint $K$ of a partial cube $G$, the set $\mathrm{Att}(K)$ of all attaching points of $K$ is convex (Att-convexity of $G$).  In the first part of this paper (Section~\ref{S:charact.}), we show that Att-convexity is a necessary and sufficient condition for a (finite or infinite) connected bipartite graph to be a partial cube.  The affirmative answer to the above question follows immediately: any connected bipartite graph with pre-hull number at most $1$ is a partial cube.

The class of partial cubes with pre-hull number at most $1$ contains most of the mainly studied partial cubes such as: median graphs, cellular bipartite graphs, benzenoid graphs and netlike partial cubes.  We show that this class is closed under gated subgraphs (but not convex ones), gated amalgams and cartesian products.

\section{Preliminaries}\label{S:prelim.}

 \subsection{Graphs}\label{SS:gr.}
 
The graphs we consider are undirected, without loops or multiple
edges, and may be finite or infinite.  If $x \in V(G)$, the set
$N_{G}(x) := \{y \in V(G) : xy \in E(G)\}$ is the \emph{neighborhood}
of $x$ in $G$.  For a set $S$ of vertices of a graph $G$ we put
$N_{G}(S) := \bigcup_{x \in S}N_{G}(x)-S$, and we denote by
$\partial_{G}(S)$ the \emph{edge-boundary} of $S$ in $G$, that is the
set of all edges of $G$ having exactly one end-vertex in $S$.
Moreover, $G[S]$ is the subgraph of $G$ induced by $S$, and $G-S:=
G[V(G)-S]$.

\emph{Paths} are considered as subgraphs rather than as sequences of
vertices.  Thus an $(x,y)$-path is also a $(y,x)$-path.  If $u$ and
$v$ are two vertices of a path $P$, then we denote by $P[u,v]$ the
segment of $P$ whose end-vertices are $u$ and $v$.

Let $G$ be a connected graph.  The usual \emph{distance} between two
vertices $x$ and $y$, that is, the length of any
\emph{$(x,y)$-geodesic} (= shortest $(x,y)$-path) in $G$, is denoted
by $d_{G}(x,y)$.  A connected subgraph $H$ of $ G $
is \emph{isometric} in $G$ if $d_{H}(x,y) = d_{G}(x,y)$ for all
vertices $x$ and $y$ of $H$.  The \emph{(geodesic) interval}
$I_{G}(x,y)$ between two vertices $x$ and $y$ of $G$ consists of the
vertices of all $(x,y)$-geodesics in $G$.

\subsection{Convexities}\label{SS:conv.}

A \emph{convexity} on a set $X$ is an algebraic closure system
$\mathcal{C}$ on $X$.  The elements of $\mathcal{C}$ are the
\emph{convex sets} and the pair $(X,\mathcal{C})$ is called a
\emph{convex structure}.  See van de Vel~\cite{V93} for a detailed
study of abstract convex structures.  Several kinds of graph
convexities, that is, convexities on the vertex set of a graph $G$,
have already been investigated.  We will principally work with the
\emph{geodesic convexity}, that is, the convexity on $V(G)$ which is
induced by the \emph{geodesic interval operator} $I_{G}$.  In this convexity,
a subset $C$ of $V(G)$ is convex provided it contains the geodesic
interval $I_{G}(x,y)$ for all $x, y \in C$.  The \emph{convex hull}
$co_{G}(A)$ of a subset $A$ of $V(G)$ is the smallest convex
set which contains $A$.  The convex hull of a finite set is called a
\emph{polytope}.  A subset $H$ of $V(G)$ is a \emph{half-space} if $H$
and $V(G)-H$ are convex.

A \emph{copoint} at a point $x \in X$ is a
convex set $C$ which is maximal with respect to the property that $x
\notin C$; $x$ is an \emph{attaching point} of $K$.  Note that
$co_G(K \cup \{x \}) = co_G(K \cup
\{y \})$ for any two attaching points $x, y$ of $K$.  We denote by $\textnormal{Att}(K)$ the set of all attaching
points of $K$, i.e.,
\begin{equation*}\label{E:Att}
\mathrm{Att}(K) := co_G(K \cup \{x \})-K.
\end{equation*}

We denote by $\mathcal{I}_{G}$ the \emph{pre-hull operator} of
the geodesic convex structure of $G$, i.e. the self-map of
$\mathcal{P}(V(G))$ such that $\mathcal{I}_{G}(A) := \bigcup_{x, y \in
A}I_{G}(x,y)$ for each $A \subseteq V(G)$.  The convex hull of a set
$A \subseteq V(G)$ is then $co_{G}(A) = \bigcup_{n \in
\mathbb{N}}\mathcal{I}_{G}^{n}(A)$.  Furthermore we will say that a
subgraph of a graph $G$ is \emph{convex} if its vertex set is convex,
and by the \emph{convex hull} $co_{G}(H)$ of a subgraph $H$ of $G$ we
will mean the smallest convex subgraph of $G$ containing
$H$ as a subgraph, that is, $$co_{G}(H) := G[co_{G}(V(H))].$$

\subsection{Bipartite graphs and partial cubes}\label{SS:part.cubes}

All graphs considered here are connected.

For an edge $ab$ of a graph $G$, let
\begin{align*}
	W_{ab}^{G}& := \{x \in V(G): d_{G}(a,x) < d_{G}(b,x) \},\\
	U_{ab}^{G}& := \{x \in W_{ab}: x \text{\;has a neighbor in}\; W_{ba}\}.
\end{align*}

If no confusion is likely, we will simply denote $W_{ab}^{G}$ and
$U_{ab}^{G}$ by $W_{ab}$ and $U_{ab}$, respectively.  Note that the
sets $W_{ab}$ and $W_{ba}$ are disjoint and that $V(G) = W_{ab} \cup
W_{ab}$ if $G$ is bipartite and connected.

Two edges $xy$ and $uv$ are in the Djokovi\'{c}-Winkler
relation $\Theta$ if

$$d_{G}(x,u)+d_{G}(y,v) \neq d_{G}(x,v)+d_{G}(y,u).$$

If $G$ is bipartite, the edges $xy$ and $uv$ are in
relation $\Theta$ if and only if $d_{G}(x,u) = d_{G}(y,v)$ and
$d_{G}(x,v) = d_{G}(y,u)$.  The relation $\Theta$ is clearly reflexive 
and symmetric.

\begin{lem}\label{L:bip.gr./convex}
Let $C$ be a convex set of a bipartite graph $G$.  Then $C \subseteq W_{ab}$ for any edge $ab \in \partial_{G}(C)$ with $a \in C$.
\end{lem}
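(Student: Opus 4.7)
The plan is to fix an arbitrary $x \in C$ and show directly that $d_G(a,x) < d_G(b,x)$, which will give $x \in W_{ab}$. Since $G$ is bipartite and connected, the remark just before the lemma tells us $V(G) = W_{ab} \cup W_{ba}$, so it suffices to rule out the case $x \in W_{ba}$.

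So I would argue by contradiction: assume $x \in C$ but $x \in W_{ba}$, i.e.\ $d_G(b,x) < d_G(a,x)$. Using the triangle inequality together with the edge $ab$, one has $d_G(a,x) \le d_G(b,x) + 1$, and since $G$ is bipartite the parities of $d_G(a,x)$ and $d_G(b,x)$ differ, forcing the equality $d_G(a,x) = d_G(b,x) + 1 = d_G(a,b) + d_G(b,x)$. This equality is precisely the statement that $b$ lies on some $(a,x)$-geodesic, i.e.\ $b \in I_G(a,x)$.

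Now I would invoke the hypothesis that $C$ is convex. Since both $a$ and $x$ belong to $C$, convexity gives $I_G(a,x) \subseteq C$, so $b \in C$. But this contradicts the assumption $ab \in \partial_G(C)$ with $a \in C$, because the edge-boundary consists of edges with \emph{exactly one} end-vertex in $C$. Therefore $x$ cannot lie in $W_{ba}$, and hence $x \in W_{ab}$, as desired.

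I do not expect any real obstacle here: the whole argument hinges on the standard bipartite fact that $V(G)$ partitions into $W_{ab}$ and $W_{ba}$ and on the observation that non-membership in $W_{ab}$ places $b$ on a geodesic between two points of $C$. The only point that needs a little care is the bipartite parity step that upgrades the inequality $d_G(b,x) < d_G(a,x)$ to the equality $d_G(a,x) = d_G(a,b) + d_G(b,x)$.
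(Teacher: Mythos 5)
Your argument is correct and is essentially the paper's own proof: assume $x \notin W_{ab}$, conclude $b \in I_G(a,x)$, and contradict convexity of $C$ together with $ab \in \partial_G(C)$. You merely spell out the bipartite parity step that the paper leaves implicit.
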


\begin{proof}
Let $x \in C$ and $ab \in \partial_{G}(C)$ with $a \in C$.  Suppose that $x \notin W_{ab}$.  Then $b \in I_{G}(x,a)$, and thus $b \in C$ by the convexity of $C$, contrary to the fact that $ab \in \partial_{G}(C)$.
\end{proof}

\begin{rem}\label{R:Theta/notation}
If $G$ is bipartite, then, by~\cite[Lemma 11.2]{HIK11}, \emph{the notation can be chosen so that the edges $xy$ and $uv$ are in
relation $\Theta$ if and only if $$d_{G}(x,u) = d_{G}(y,v) = d_{G}(x,v)-1 = d_{G}(y,u)-1,$$ or equivalently if and only if $$y \in I_G(x,v) \text{\; and}\; x \in I_G(y,u).$$}
From now on, we will always use this way of defining the relation $\Theta$.  Note that, in this way, the edges $xy$ and $yx$ are not in relation $\Theta$ because $y \notin I_G(x,x)$ and $x \notin I_G(y,y)$.  In other word, each time the relation $\Theta$ is used, the notation of an edge induces an orientation of this edge.
\end{rem}

We recall the main characterizations of partial cubes, that is
of isometric subgraphs of hypercubes (see~\cite{HIK11}).  Partial cubes are particular
connected bipartite graphs.

\begin{thm}\label{T:Djokovic-Winkler}\textnormal{(Djokovi\'{c}~\cite[Theorem
1]{D73} and Winkler~\cite{W84})} A connected bipartite graph $G$ is a
partial cube if and only if it has one of the following properties:

\textnormal{(i)}\; For every edge $ab$ of $G$, the sets
$W_{ab}$ and $W_{ba}$ are convex.

\textnormal{(ii)}\;  The relation $\Theta$ is transitive.
\end{thm}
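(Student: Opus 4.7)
The plan is to establish the cycle $G$ is a partial cube $\Rightarrow$ (i) $\Rightarrow$ (ii) $\Rightarrow$ $G$ is a partial cube.

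For the first implication, I would fix an isometric embedding of $G$ into a hypercube $\{0,1\}^{n}$.  Every edge $ab$ of $G$ differs in exactly one coordinate $k$, and since $d_{G}$ coincides with the Hamming distance on $V(G)$, the set $W_{ab}^{G}$ is exactly the trace on $V(G)$ of the coordinate half-space $\{x : x_{k} = a_{k}\}$.  Coordinate half-spaces are convex in the hypercube, and because $G$ is isometric in the hypercube, every geodesic of $G$ is a geodesic of the hypercube and stays inside the trace; hence $W_{ab}^{G}$ is convex in $G$.

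For (i) $\Rightarrow$ (ii), assume all $W$-sets are convex and suppose $xy\,\Theta\,uv$ and $uv\,\Theta\,pq$ under the oriented convention of Remark~2.3.  A short computation from the equalities $d_{G}(x,u)=d_{G}(y,v)=d_{G}(x,v)-1=d_{G}(y,u)-1$ shows that $xy\,\Theta\,uv$ forces $u\in W_{xy}$ and $v\in W_{yx}$, so $uv\in\partial_{G}(W_{xy})$.  Applying Lemma~\ref{L:bip.gr./convex} to $W_{xy}$ (convex) and edge $uv$ gives $W_{xy}\subseteq W_{uv}$, and symmetrically $W_{yx}\subseteq W_{vu}$; since both pairs partition $V(G)$, $W_{uv}=W_{xy}$.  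Applied to the second relation this yields $W_{pq}=W_{uv}=W_{xy}$, whence $xy\,\Theta\,pq$ follows by reading the same characterisation backwards.

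For the main implication (ii) $\Rightarrow$ $G$ is a partial cube, assume $\Theta$ is transitive; being also reflexive and symmetric, it is an equivalence relation on (suitably oriented) $E(G)$.  Let $(F_{i})_{i\in I}$ be its classes.  I would first show that for any edge $ab\in F_{i}$, the pair $\{W_{ab},W_{ba}\}$ is a bipartition of $V(G)$ depending only on $F_{i}$.  Fixing a basepoint $v_{0}\in V(G)$, define $f:V(G)\to\{0,1\}^{I}$ by $f(v)_{i}=1$ iff $v$ and $v_{0}$ lie on opposite sides of the $F_{i}$-cut; $f$ is then a graph homomorphism into the hypercube (adjacent vertices of $G$ differ in exactly one coordinate).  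The isometry of $f$ reduces to showing that every $(u,v)$-geodesic in $G$ uses each $\Theta$-class separating $u$ from $v$ exactly once and no other class at all.

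The hardest step is this last verification.  The inequality $d_{G}(u,v)\ge d(f(u),f(v))$ is easy, since each separating $\Theta$-class must be crossed at least once along any $(u,v)$-path.  The reverse inequality requires ruling out that a geodesic ever repeats a $\Theta$-class; here transitivity of $\Theta$ is essential, because two edges of a purported geodesic lying in a common class can, via a local exchange driven by the definition of $\Theta$ (and the induced common cut from the previous paragraph), be used to re-route the path into a strictly shorter $(u,v)$-walk, contradicting minimality of the geodesic.
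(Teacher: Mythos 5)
Two preliminary remarks: the paper does not prove this theorem at all --- it is quoted from Djokovi\'{c} and Winkler --- so your attempt can only be measured against the classical proofs. Your overall plan (partial cube $\Rightarrow$ (i) $\Rightarrow$ (ii) $\Rightarrow$ partial cube) is sound, and the first two legs are essentially correct: the coordinate--half-space argument for ``partial cube $\Rightarrow$ (i)'', and the use of Lemma~\ref{L:bip.gr./convex} to show that under (i) two $\Theta$-related edges determine the same pair $\{W_{ab},W_{ba}\}$, from which transitivity follows.

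The gap is in the decisive direction (ii) $\Rightarrow$ partial cube, and it sits exactly at the sentence ``I would first show that for any edge $ab\in F_{i}$, the pair $\{W_{ab},W_{ba}\}$ is a bipartition of $V(G)$ depending only on $F_{i}$.'' This is not a routine verification: it is precisely where transitivity has to do its work, and it is false without it. In $K_{2,3}$ with parts $\{a,b\}$ and $\{u,v,w\}$ the edges $au$ and $bv$ are $\Theta$-related ($d(a,b)+d(u,v)=4\neq 2=d(a,v)+d(b,u)$), yet $W_{au}=\{a,v,w\}$ and $W_{bv}=\{b,u,w\}$ give different cuts; so any proof of this claim must invoke (ii) in an essential way, and you give no argument. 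By contrast, the step you single out as the hardest --- that a geodesic never uses two edges of one $\Theta$-class --- is the easy part: once $\Theta$ is transitive the classes are $\Theta$-classes, and a direct distance computation along the geodesic (the ``only if'' half of Lemma~\ref{L:gen.propert.}(iv), valid in every connected graph) shows two distinct edges of a geodesic are never $\Theta$-related; no ``local exchange/re-routing'' is needed, and the exchange argument you gesture at is not carried out anyway. A standard way to close the hole: first prove (ii) $\Rightarrow$ (i) directly --- in a bipartite graph an edge is $\Theta$-related to $ab$ if and only if it has one end in $W_{ab}$ and one in $W_{ba}$, so a geodesic between two vertices of $W_{ab}$ leaving $W_{ab}$ would contain two distinct edges $\Theta$-related to $ab$, hence, by transitivity, to each other, contradicting the arithmetic fact above; with (i) in hand your own (i) $\Rightarrow$ (ii) computation yields the well-definedness of the cut per class, and then your homomorphism-plus-counting argument for the isometry goes through. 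As written, however, the proposal omits the essential lemma and mislocates the role of transitivity, so it does not constitute a proof of the hard implication.
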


It follows in particular that \emph{the half-spaces of a partial cube
$G$ are the sets $W_{ab}$, $ab \in E(G)$}.  Furthermore we can easily 
prove that the copoints of a partial cube are its half-spaces. The following technical lemma will be used later.

\begin{lem}\label{L:prop.isom subgr.}
Let $G$ be a partial cube, $F$ an isometric subgraph of $G$, and $ab$ an edge of $F$.  Then

\textbullet\; $W_{ab}^{F} = W_{ab}^{G} \cap V(F)$\; and\; $W_{ba}^{F} = W_{ba}^{G} \cap V(F)$

\textbullet\; $U_{ab}^{F} \subseteq U_{ab}^{G} \cap V(F)$.

If moreover $F$ is convex in $G$, then

\textbullet\; $U_{ab}^{F} = U_{ab}^{G} \cap V(F)$.
\end{lem}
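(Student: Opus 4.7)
The plan is to treat the three bullets in order, using successively stronger hypotheses on $F$.

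For the first bullet, I would apply isometry directly. Since $F$ is isometric in $G$, $d_F(a,x)=d_G(a,x)$ and $d_F(b,x)=d_G(b,x)$ for every $x\in V(F)$, so the defining inequality $d_F(a,x)<d_F(b,x)$ of $W_{ab}^F$ coincides with $d_G(a,x)<d_G(b,x)$ restricted to $V(F)$; hence $W_{ab}^F=W_{ab}^G\cap V(F)$, and $W_{ba}^F=W_{ba}^G\cap V(F)$ by symmetry.

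For the second bullet, take $x\in U_{ab}^F$ and a neighbor $y$ of $x$ in $F$ with $y\in W_{ba}^F$. By the first bullet $x\in W_{ab}^G$ and $y\in W_{ba}^G$, and $xy\in E(F)\subseteq E(G)$, so $y$ witnesses $x\in U_{ab}^G\cap V(F)$.

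For the third bullet, assume $F$ is convex in $G$ and let $x\in U_{ab}^G\cap V(F)$. By the first bullet $x\in W_{ab}^F$, so it suffices to exhibit a neighbor of $x$ in $F$ that lies in $W_{ba}^F$. Choose a neighbor $y$ of $x$ in $G$ with $y\in W_{ba}^G$. The decisive observation is that in the partial cube $G$ the edge $xy$ then lies in the $\Theta$-class of $ab$, with the orientation convention of Remark~\ref{R:Theta/notation} making $xy$ point from $W_{ab}^G$ to $W_{ba}^G$ just as $ab$ does. Remark~\ref{R:Theta/notation} therefore gives $y\in I_G(x,b)$. Because $x,b\in V(F)$ and $V(F)$ is convex, $y\in V(F)$; and since isometric subgraphs are induced, $xy\in E(F)$. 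Together with $y\in W_{ba}^F$ (first bullet) this gives $x\in U_{ab}^F$.

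The only step that carries real content is the last one: one must invoke the partial-cube structure of $G$ through the $\Theta$-relation to locate the neighbor $y$ witnessing $x\in U_{ab}^G$ on a geodesic between two points of $F$, after which convexity of $F$ pulls $y$ into $V(F)$. The first two bullets are bookkeeping from the isometry of $F$.
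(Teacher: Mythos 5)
Your proposal is correct and follows essentially the same route as the paper: the first two bullets by direct bookkeeping with the isometry, and for the convex case the paper likewise takes the neighbor $y\in W_{ba}^{G}$ of $x$, notes $y\in I_{G}(x,b)=I_{F}(x,b)$ by convexity, and concludes $x\in U_{ab}^{F}$. Your detour through the $\Theta$-class of $ab$ to justify $y\in I_{G}(x,b)$ is a harmless (slightly heavier than necessary, since it already follows from bipartiteness) variant of the same step.
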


\begin{proof}
The first assertions are immediate consequences of the definitions of $W_{ab}$ and $U_{ab}$, and of the fact that $F$ is isometric in $G$.  Assume now that $F$ is convex in $G$.  Let $x \in U_{ab}^{G} \cap V(F)$, and let $y$ be the neighbor of $x$ in $U_{ba}^{G}$.  Then $y \in I_{G}(x,b) = I_{F}(x,b)$ since $F$ is convex.  Hence $x \in U_{ab}^{F}$.  Therefore $U_{ab}^{F} \supseteq U_{ab}^{G} \cap V(F)$, and we are done by the above converse inclusion.
\end{proof}

In the following lemma we list some well-known properties of partial 
cubes.

\begin{lem}\label{L:gen.propert.}
Let $G$ be a partial cube.  We have the following properties:

\textnormal{(i)}\; Each interval of $G$ is finite and convex.

\textnormal{(ii)}\; Each polytope of $G$ is finite.

\textnormal{(iii)}\; Let $x,y$ be two vertices of $G$, $P$ an
$(x,y)$-geodesic and $W$ an $(x,y)$-path of $G$.  Then each edge of
$P$ is in relation $\Theta$ with some edge of $W$.

\textnormal{(iv)}\;  A path $P$ in $G$ is a geodesic if and only if no two distinct edges of $P$ are $\Theta$-equivalent.
\end{lem}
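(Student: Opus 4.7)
The plan is to derive properties (i)--(iv) from the Djokovi\'c-Winkler characterization (Theorem~\ref{T:Djokovic-Winkler}), which guarantees that every set $W_{ab}$ is convex, together with the isometric embedding $G \hookrightarrow Q_n$ afforded by the definition of a partial cube.

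For (i), I would establish the identity $I_G(x,y) = \bigcap \{W_{ab} : x, y \in W_{ab}\}$. The inclusion $\subseteq$ is immediate from the convexity of each $W_{ab}$; the reverse inclusion follows from the analogous identity in the ambient hypercube (where intervals coincide with intersections of coordinate half-spaces) combined with $I_G(x,y) = I_{Q_n}(x,y) \cap V(G)$, which itself holds because $G$ is isometric in $Q_n$. Convexity of $I_G(x,y)$ is then immediate as an intersection of convex sets, while finiteness follows from $|I_{Q_n}(x,y)| = 2^{d_G(x,y)}$. For (ii), I would use the same embedding: since $I_G(u,v) \subseteq I_{Q_n}(u,v)$ for all $u, v \in V(G)$, iterating the pre-hull operators yields $co_G(F) \subseteq co_{Q_n}(F)$ for every finite $F \subseteq V(G)$, and the latter is the subcube of $Q_n$ spanned by the coordinates on which $F$ is non-constant, hence finite.

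For (iii), I would pick an edge $ab$ of $P$ with $a$ closer to $x$. Since $P$ is a geodesic, $x \in W_{ab}$ and $y \in W_{ba}$, so any $(x,y)$-path $W$ must contain at least one edge $uv$ with $u \in W_{ab}$ and $v \in W_{ba}$. To conclude that $uv$ and $ab$ are in relation $\Theta$, I would combine $d(a,u) < d(b,u)$ and $d(b,v) < d(a,v)$ (from the definitions of $W_{ab}, W_{ba}$) with the adjacency bounds $d(a,v) \leq d(a,u) + 1$ and $d(b,u) \leq d(b,v) + 1$; together these force $d(a,u) = d(b,v)$ and $d(b,u) = d(a,v)$, which is precisely the $\Theta$-condition for bipartite graphs.

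For (iv), which I expect to be the most delicate step, I would treat the two directions separately. For the forward direction, assuming a geodesic $P = v_0, \ldots, v_k$ contains two $\Theta$-equivalent edges $v_{i-1}v_i$ and $v_{j-1}v_j$ ($i<j$) corresponding to some cut $(A, A^c)$, a case analysis on their orientations relative to the cut --- inserting if necessary a third edge of $P$ in the same class, forced by the path having to return across the cut --- produces two edges of $P$ crossing in opposite directions; the corresponding sub-geodesic of $P$ then has both endpoints in the convex set $A$ yet passes through $A^c$, a contradiction. For the converse, assuming the edges of $P$ are pairwise non-$\Theta$-equivalent, I would apply (iii) with $Q$ any $(x,y)$-geodesic to get an injection from $E(Q)$ into the $\Theta$-classes of $E(P)$, and combine this with the parity observation (via the hypercube embedding, every $\Theta$-class is crossed by any $(x,y)$-path with a fixed parity, determined by whether the class separates $x$ from $y$) to rule out edges of $P$ in non-separating classes; the equality $|E(P)| = d_G(x,y)$, and hence that $P$ is a geodesic, then follows.
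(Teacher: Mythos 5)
Your argument is correct, but there is nothing in the paper to compare it against: Lemma~\ref{L:gen.propert.} is stated as a list of well-known properties of partial cubes and is given no proof (the paper simply points to the standard literature). Your route is the standard one: use the isometric embedding of $G$ into a hypercube to get $I_G(x,y)=I_{Q_n}(x,y)\cap V(G)$ and the identification of $\Theta$-classes with coordinates, together with the Djokovi\'c--Winkler convexity of the sets $W_{ab}$; with these in hand, (i) and (ii) follow from the finiteness and convexity of subcubes, (iii) from the crossing argument you give, and (iv) from convexity of $W_{ab}$ (forward direction) and coordinate counting (converse). Two small points of streamlining: in (iii) and in the forward direction of (iv) you use, implicitly, that every edge with one end in $W_{ab}$ and the other in $W_{ba}$ is $\Theta$-equivalent to $ab$ --- the distance computation you carry out in (iii) is exactly the proof of this fact, so it is worth isolating it once and reusing it; and in the converse of (iv) the injection of $E(Q)$ into the $\Theta$-classes of $E(P)$ obtained from (iii) only yields the trivial bound $d_G(x,y)\le |E(P)|$ and can be dropped, since the parity/coordinate argument alone (distinct edges of $P$ flip distinct coordinates, and the net change from $x$ to $y$ forces exactly the coordinates separating $x$ from $y$ to be flipped, each once) already gives $|E(P)|=d_G(x,y)$.
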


\section{A characterization of partial cubes}\label{S:charact.}

\begin{defn}\label{D:Att-convex}
A bipartite graph $G$ is said to be \emph{Att-convex} if for each copoint $K$ of $G$,
the set $\mathrm{Att}(K)$ is convex.
\end{defn}

We now state the main result of this section.

\begin{thm}\label{T:bip.gr./Att-convex}
A connected bipartite graph $G$ is a 
partial cube if and only if it is Att-convex.
\end{thm}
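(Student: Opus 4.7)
The proof of this equivalence has two directions. The implication ($\Rightarrow$) is essentially the content of~\cite{PS09} already recalled in the introduction: in a partial cube $G$ the copoints are exactly the half-spaces $W_{ab}$, and the set of attaching points of $W_{ab}$ equals $U_{ba}$, which is convex. The novel content is the implication ($\Leftarrow$), on which I now focus, assuming throughout that $G$ is connected, bipartite, and Att-convex.

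By Theorem~\ref{T:Djokovic-Winkler}(i) it suffices to show that $W_{ab}$ is convex for every edge $ab \in E(G)$. Fix such an edge and, using Zorn's lemma, pick a copoint $K$ of $G$ at $b$ with $a \in K$. Since $ab \in \partial_{G}(K)$ with $a \in K$, Lemma~\ref{L:bip.gr./convex} yields $K \subseteq W_{ab}$. My plan is to establish the reverse inclusion $W_{ab} \subseteq K$, from which $W_{ab} = K$ is convex.

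I would first show that $\mathrm{Att}(K) \subseteq W_{ba}$. Suppose, for contradiction, that some $c \in \mathrm{Att}(K) \cap W_{ab}$ exists. Since both $b$ and $c$ belong to $\mathrm{Att}(K)$, which is convex by Att-convexity, the interval $I_{G}(b,c)$ is contained in $\mathrm{Att}(K)$. From $c \in W_{ab}$ and the bipartiteness of $G$ one obtains $d_{G}(a,c) = d_{G}(b,c)-1$, so $a$ lies on a $(b,c)$-geodesic; thus $a \in I_{G}(b,c) \subseteq \mathrm{Att}(K)$, contradicting $a \in K$ and $\mathrm{Att}(K) \cap K = \emptyset$. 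Consequently $co_{G}(K \cup \{b\}) = K \cup \mathrm{Att}(K)$ is a convex set neatly split by the partition $W_{ab} \sqcup W_{ba}$.

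The second step is the reverse inclusion $W_{ab} \subseteq K$, which is where the main difficulty lies. Assume, for contradiction, that $u \in W_{ab} \setminus K$ exists, chosen so as to minimise $d_{G}(a,u)$. The maximality of $K$ at $b$ forces $b \in co_{G}(K \cup \{u\})$, hence $co_{G}(K \cup \{u\}) \supseteq co_{G}(K \cup \{b\})$. Since $u$ lies neither in $K$ nor in $\mathrm{Att}(K)$, the set $co_{G}(K \cup \{b\})$ misses $u$ and extends by Zorn's lemma to a copoint $K^{\ast}$ of $G$ at $u$ containing $K \cup \{b\}$. Reapplying the first-step argument to $K^{\ast}$ along a boundary edge $k^{\ast} u \in \partial_{G}(K^{\ast})$ (whose existence is meant to be guaranteed by the choice of $u$) restricts $\mathrm{Att}(K^{\ast})$ to the half-space $W_{u k^{\ast}}$; iterating this construction, or combining the chosen minimality with the structure of the two copoints $K$ and $K^{\ast}$, should place a vertex of $K$ simultaneously on a geodesic between two attaching points of some copoint, yielding the desired contradiction. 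I expect this last step to be the main obstacle: the inclusion $\mathrm{Att}(K) \subseteq W_{ba}$ falls out cleanly from Att-convexity and Lemma~\ref{L:bip.gr./convex}, whereas excluding $K \subsetneq W_{ab}$ will require a careful coordination of several copoints of $G$ with respect to the partition $W_{ab} \sqcup W_{ba}$, and this is likely to form the technical heart of the proof.
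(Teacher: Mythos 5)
Your necessity direction and your first step are correct: $\mathrm{Att}(K)\subseteq W_{ba}$ does follow cleanly from Att-convexity, bipartiteness and Lemma~\ref{L:bip.gr./convex}, and the reduction via Theorem~\ref{T:Djokovic-Winkler}(i) is a legitimate target. But the proof has a genuine gap exactly where you flag it: the inclusion $W_{ab}\subseteq K$ is never established. Your second step produces a minimal counterexample $u$, a copoint $K^{\ast}$ at $u$ containing $K\cup\{b\}$, and (via minimality) a boundary edge $vu\in\partial_{G}(K^{\ast})$ with $v\in K$, from which step one gives $\mathrm{Att}(K^{\ast})\subseteq W_{uv}$ --- and then the argument stops. ``Iterating this construction \dots should place a vertex of $K$ on a geodesic between two attaching points of some copoint'' is a hope, not a proof: nothing in the data you have assembled relates $\mathrm{Att}(K^{\ast})$ back to the structure of $K$ or to the edge $ab$, and there is no obvious descent or finiteness that makes the iteration terminate. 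Note also that what you are trying to prove directly --- that a copoint at $b$ containing $a$ must swallow all of $W_{ab}$, i.e.\ that copoints are half-spaces --- is essentially the whole content of the theorem; the companion result \cite[Theorem 6.7]{PS09} shows that Att-convexity must be combined with the extra condition $N_G(K)\subseteq\mathrm{Att}(K)$ before such conclusions come cheaply, and eliminating that extra condition is precisely the hard part.

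For comparison, the paper does not attempt any direct convexity argument for $W_{ab}$. In the finite case it takes a copoint $K$ maximal by inclusion among \emph{all} copoints, so that $\mathrm{Att}(K)=V(G)-K$ and Att-convexity forces $K$ to be a half-space; it then contracts the $\Theta$-class $\partial_{G}(K)$, invokes the nontrivial fact that Att-convexity is inherited by this $\Theta$-contraction (Lemma~\ref{L:G'Att-conV;=>GAtt-conv.}, whose proof occupies most of the section), applies induction on $|V(G)|$, and uses the fact that an expansion of a partial cube is a partial cube. The infinite case is then reduced to finite polytopes via Lemmas~\ref{L:inf.p.c.} and~\ref{L:attach.pts/conv} together with a separate induction on a spanning-set parameter, using Lemma~\ref{L:conv.hull}. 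Your sketch contains no substitute for this machinery (and in particular no treatment of the infinite case should your direct argument need finiteness), so as it stands the sufficiency direction is not proved.
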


To prove this theorem we will need several properties of expansions of
a graph, a concept which was introduced by Mulder~\cite{M78} to
characterize median graphs and which was later generalized by
Chepoi~\cite{Che88}.

The \emph{cartesian product} $G_{1} \Box G_{2}$ of two graphs $G_{1}$ 
and $G_{2}$ is the graph with vertex set $V(G_{1}) \times V(G_{2})$,
two vertices being adjacent if they have one coordinate adjacent and 
the other equal.

\begin{defn}\label{D:proper cover} 
A pair $(V_{0},V_{1})$ of sets of vertices of a graph $G$ is called a
\emph{proper cover} of $G$ if it satisfies the following conditions:

\textbullet\;  $V_{0} \cap V_{1} \neq \emptyset$ and $V_{0} \cup V_{1} = 
V(G)$;

\textbullet\;  there is no edge between a vertex in $V_{0}-V_{1}$ and 
a vertex in $V_{1}-V_{0}$;

\textbullet\;  $G[V_{0}]$ and $G[V_{1}]$ are isometric subgraphs of 
$G$.
\end{defn}

\begin{defn}\label{D:expansion}
An \emph{expansion} of a graph $G$ with respect to a proper cover 
$(V_{0},V_{1})$ of $G$ is the subgraph  of $G \Box K_{2}$ induced by
the vertex set $(V_{0} \times \{0 \}) \cup (V_{1} \times \{1 \})$ (where
$\{0,1 \}$ is the vertex set of $K_{2}$).
\end{defn}

An expansion of a partial cube is a partial cube (see~\cite{O08}).  If $G'$ is an
expansion of a graph $G$, then we say that $G$ is
a \emph{$\Theta$-contraction} of $G'$, because, as we can easily
see, $G$ is obtained from $G'$ by contracting each element of
some $\Theta$-class of edges of $G'$.  More precisely the natural surjection of $G'$ onto $G$ is a contraction, that is, an application which maps any two adjacent vertices to adjacent vertices or to a single vertex.  A $\Theta$-contraction of a partial cube is a partial cube as well (see~\cite{O08}).

In Lemmas~\ref{L:interv.}--\ref{L:cop.G0/cop.G1}, $G$ will be a
connected bipartite graph and $G'$ an expansion of $G$ with
respect to a proper cover $(V_{0},V_{1})$ of $G$.  The following
notation will be used.

\textbullet\; For $i = 0, 1$ denote by $\psi_{i} : V_{i} \to V(G')$
the natural injection $\psi_{i} : x \mapsto (x,i)$, $x \in V_{i}$, and
let $V'_{i} := \psi_{i}(V_{i})$.  Note that $V'_{0}$ and $V'_{1}$ are
complementary half-spaces of $G'$.  It follows in particular that
these sets are copoints of $G'$.

\textbullet\; For any vertex $x$ of $G$ (resp.  $G'$),
denote by $i(x)$ an element of $\{0,1 \}$ such that $x$ belongs to
$V_{i(x)}$ (resp.  $V'_{i(x)}$).  If $x \in V(G')$ and also if $x \in 
V(G) - (V_{0} \cap V_{1})$, then $i(x)$ is unique; if $x \in V_{0} \cap
V_{1}$ it may be $0$ or $1$.

\textbullet\; For $A \subseteq V(G)$ put $$\psi(A) := \psi_{0}(A \cap V_{0})
\cup \psi_{1}(A \cap V_{1}).$$  Note that in the opposite direction we
have that for any $A' \subseteq V(G')$, $$\textnormal{pr}(A') =
\psi_{0}^{-1}(A' \cap V'_{0}) \cup
\psi_{1}^{-1}(A' \cap V'_{1}),$$ where $\textnormal{pr} : G \Box K_{2}
\to G$ is the projection $(x,i) \mapsto x$.

The following lemma is a restatement with more precisions of
\cite[Lemma 4.5]{P05-1}.

\begin{lem}\label{L:interv.}
Let $G$ be a connected bipartite graph and $G'$ an expansion of
$G$ with respect to a proper cover $(V_{0},V_{1})$ of
$G$, and let $P = \langle x_{0},\ldots,x_{n}\rangle$ be a path in 
$G$.  We have the following properties:

\textnormal{(i)}\; If $x_{0}, x_{n} \in V_{i}$ for some $i = 0$ or $1$, then:

\textbullet\;  if $P$ is a geodesic in $G$, then there exists an $(x_{0},x_{n})$-geodesic $R$ in $G[V_i]$ such that $V(P) \cap V_i \subseteq V(R)$;

\textbullet\;  $P$ is a geodesic in $G[V_i]$ if and only if $P' = \langle \psi_{i}(x_{0}),\ldots,\psi_{i}(x_{n}) \rangle$ is
a geodesic in $G'$;  

\textbullet\;  $d_{G'}(\psi_{i}(x_{0}),\psi_{i}(x_{n}))
= d_{G}(x_{0},x_{n})$;

\textbullet\;  $I_{G'}(\psi_{i}(x_{0}),\psi_{i}(x_{n})) =
\psi_{i}(I_{G[V_i]}(x_{0},x_{n})) \subseteq \psi(I_{G}(x_{0},x_{n}))$.

\textnormal{(ii)}\; If $x_0 \in V_i$ and $x_1 \in V_{1-i}$ for some $i = 0$ or $1$, then:

\textbullet\;  if there exists $p$ such that $x_0,\ldots,x_p \in V_i$ and $x_p,\ldots,x_n \in V_{1-i}$, then $P$ is a geodesic in $G$ if and only if the path $$P' = \langle
\psi_{i}(x_{0}),\ldots,\psi_{i}(x_{p}),\psi_{1-i}(x_{p}),\ldots,\psi_{1-i}(x_{n})
\rangle$$ is a geodesic in $G'$;

\textbullet\;  $d_{G'}(\psi_{i}(x_{0}),\psi_{1-i}(x_{n}))
= d_{G}(x_{0},x_{n}) + 1$;

\textbullet\;  $I_{G'}(\psi_{i}(x_{0}),\psi_{1-i}(x_{n})) =
\psi(I_{G}(x_{0},x_{n}))$.
\end{lem}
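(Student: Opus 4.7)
\emph{Plan.} The approach rests on three structural facts about the expansion: (a) $V'_0$ and $V'_1$ are complementary half-spaces of $G'$, so each $V'_i$ is convex in $G'$; (b) $\psi_i$ is an isomorphism $G[V_i]\to G'[V'_i]$; and (c) the only edges of $G'$ with one end in $V'_0$ and one in $V'_1$ are the ``cross-edges'' $\psi_0(x)\psi_1(x)$ for $x\in V_0\cap V_1$. Combined with the proper-cover hypothesis that $G[V_i]$ is isometric in $G$, these give $d_{G'}(\psi_i(u),\psi_i(v))=d_{G[V_i]}(u,v)=d_G(u,v)$ for all $u,v\in V_i$, which immediately yields three of the four bullets in part (i).

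For the first bullet of (i), I would use a short-cut argument: if $P$ is an $(x_0,x_n)$-geodesic in $G$ with endpoints in $V_i$, then between any two successive vertices of $V(P)\cap V_i$ along $P$ the corresponding subpath is a $G$-geodesic, so by isometry of $G[V_i]$ can be replaced by a $G[V_i]$-geodesic of the same length. Concatenating these replacements produces an $(x_0,x_n)$-walk in $G[V_i]$ of length $d_G(x_0,x_n)$, which must then be a geodesic $R$ passing through all of $V(P)\cap V_i$. The interval formula $I_{G'}(\psi_i(x_0),\psi_i(x_n))=\psi_i(I_{G[V_i]}(x_0,x_n))$ is immediate from (a)--(b), and the inclusion $I_{G[V_i]}(x_0,x_n)\subseteq I_G(x_0,x_n)$ again holds by isometry, giving the final containment in $\psi(I_G(x_0,x_n))$.

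For part (ii), any $G'$-path from $\psi_i(x_0)$ to $\psi_{1-i}(x_n)$ must use at least one cross-edge, and the projection $\mathrm{pr}$ reduces its length by exactly the number of cross-edges used, so $d_{G'}(\psi_i(x_0),\psi_{1-i}(x_n))\geq d_G(x_0,x_n)+1$. Conversely, when $P$ has single-crossing form with pivot $x_p\in V_0\cap V_1$, its two halves are geodesics in $G[V_i]$ and $G[V_{1-i}]$ by the geodesic equivalence of part (i), so the lifted path $P'$ of length $n+1$ is a concatenation of two $G'$-geodesics and a cross-edge; this proves the distance formula and the equivalence ``$P$ geodesic $\iff P'$ geodesic''. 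The interval equality $I_{G'}(\psi_i(x_0),\psi_{1-i}(x_n))=\psi(I_G(x_0,x_n))$ then has a clean $\subseteq$ direction: any $\psi_j(y)$ on a $G'$-geodesic between the endpoints allows the two halves to be analyzed by the distance formulas of (i)--(ii), forcing $d_G(x_0,y)+d_G(y,x_n)=d_G(x_0,x_n)$ and hence $y\in I_G(x_0,x_n)$.

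The main obstacle is the reverse inclusion $\psi(I_G(x_0,x_n))\subseteq I_{G'}(\psi_i(x_0),\psi_{1-i}(x_n))$, because a $G$-geodesic through a given $y\in I_G(x_0,x_n)$ need not be of single-crossing form. To normalize it I would split the geodesic at $y$ and focus on whichever half has endpoints on opposite sides; along that half the last vertex $z$ still lying on the ``$V_i$ side'' has its successor in $V_{1-i}\setminus V_i$, so the proper-cover condition (no edge between $V_0-V_1$ and $V_1-V_0$) forces $z\in V_0\cap V_1$. Part (i) applied to $G[V_i]$ and $G[V_{1-i}]$ then lets the prefix and suffix be replaced by geodesics staying entirely in their respective halves, producing a single-crossing $G$-geodesic through $y$ whose lift via the construction above realises distance $d_G(x_0,x_n)+1$ and places $\psi_j(y)$ on a $G'$-geodesic.
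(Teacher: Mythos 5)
The paper itself gives no proof of this lemma: it simply declares it to be ``a restatement with more precisions of [P05-1, Lemma 4.5]'' and defers to that reference, so there is no in-paper argument to compare yours against. Judged on its own, your proposal is essentially correct and self-contained: the projection/lifting bookkeeping (projection shortens a $G'$-path by exactly the number of cross-edges, cross-edges are exactly the pairs $\psi_0(x)\psi_1(x)$ with $x\in V_0\cap V_1$, and $V'_0,V'_1$ are convex, as the paper notes before the lemma) gives part (i) and the lower bound $d_{G'}\geq d_G+1$ in (ii), and your single-crossing normalization (the last vertex of the relevant half still in $V_i$ must lie in $V_0\cap V_1$ by the proper-cover condition, then replace prefix and suffix using the first bullet of (i)) supplies the matching upper bound and the interval equality. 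Two points deserve tightening. First, the logical order: the upper bound $d_{G'}\leq d_G+1$, hence the full distance formula, the ``only if'' direction of the geodesic equivalence in (ii), and the $\subseteq$ direction of the interval equality all rest on the normalization you only present in the final paragraph; it should be proved first (it uses only part (i) and the proper-cover condition, so there is no circularity, but as written the middle paragraph claims the distance formula before its upper bound is available). Second, in the $\supseteq$ direction note that for $y\in I_G(x_0,x_n)\cap V_0\cap V_1$ the set $\psi(I_G(x_0,x_n))$ contains both $\psi_0(y)$ and $\psi_1(y)$; this is covered either by observing that in this case you may take the pivot of the normalized geodesic to be $y$ itself, so the lift traverses the cross-edge $\psi_i(y)\psi_{1-i}(y)$ and passes through both copies, or directly from the distance formulas of (i) and (ii). With these adjustments your argument is a complete proof.
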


From Corollary~\ref{C:conv.G0/conv.G1} to Lemma~\ref{L:cop.G0/cop.G1}, $G$ will be a connected bipartite graph and $G'$ an expansion of $G$ with
respect to a proper cover $(V_{0},V_{1})$ of $G$.  The following result is an immediate consequence of Lemma~\ref{L:interv.}.

\begin{cor}\label{C:conv.G0/conv.G1}
Let $K$ be a convex set of $G$.  Then $\psi(K)$ is a convex set of 
$G'$.
\end{cor}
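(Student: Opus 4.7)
\emph{Plan.} The plan is to verify convexity of $\psi(K)$ directly, by picking two arbitrary vertices of $\psi(K)$ and showing that the geodesic interval between them in $G'$ stays within $\psi(K)$. Since $V'_0$ and $V'_1$ are disjoint and cover $V(G')$, any two vertices $u,v \in \psi(K)$ fall into one of two cases: both in the same $V'_i$, or one in each.

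In the first case, write $u=\psi_i(x)$ and $v=\psi_i(y)$ with $x,y\in K\cap V_i$ (the preimage $x$ is unique because $\psi_i$ is injective on $V_i$ and $V'_0 \cap V'_1 = \emptyset$). The last bullet of Lemma~\ref{L:interv.}(i) then yields
\[
I_{G'}(u,v) \;=\; \psi_i\bigl(I_{G[V_i]}(x,y)\bigr) \;\subseteq\; \psi(I_G(x,y)).
\]
Since $K$ is convex in $G$ and $x,y\in K$, we have $I_G(x,y)\subseteq K$, hence $\psi(I_G(x,y))\subseteq \psi(K)$, so $I_{G'}(u,v)\subseteq \psi(K)$.

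In the second case, after possibly swapping $u$ and $v$, write $u=\psi_0(x)$ and $v=\psi_1(y)$ with $x\in K\cap V_0$ and $y\in K\cap V_1$. The last bullet of Lemma~\ref{L:interv.}(ii) gives $I_{G'}(u,v)=\psi(I_G(x,y))$, and convexity of $K$ again forces $I_G(x,y)\subseteq K$, so $I_{G'}(u,v)\subseteq \psi(K)$.

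The corollary is therefore essentially a direct transcription of Lemma~\ref{L:interv.}; there is no real obstacle. The only mildly delicate point is the case analysis, which hinges on the fact that $\psi_0$ and $\psi_1$ embed $V_0$ and $V_1$ injectively into the \emph{disjoint} half-spaces $V'_0$ and $V'_1$, so each $u\in \psi(K)$ uniquely determines the index $i(u)$ and a preimage in $K\cap V_{i(u)}$ — in particular, a vertex $x\in V_0\cap V_1\cap K$ contributes two lifts $\psi_0(x),\psi_1(x)$ to $\psi(K)$, but each is handled individually by the scheme above.
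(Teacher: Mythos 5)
Your argument is correct and is exactly the reasoning the paper has in mind: the paper gives no separate proof, stating only that the corollary "is an immediate consequence of Lemma~\ref{L:interv.}," and your two-case application of the last bullets of parts (i) and (ii) of that lemma, combined with convexity of $K$ in $G$, is precisely that deduction.
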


\begin{lem}\label{L:conv.hull}
$\pr(co_{G'}(S)) \subseteq co_G(\pr(S))$ for any $S \subseteq V(G')$.
\end{lem}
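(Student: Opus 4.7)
The plan is to sandwich $co_{G'}(S)$ inside a convex subset of $G'$ whose projection is exactly $co_G(\pr(S))$, and to obtain this convex superset as $\psi(co_G(\pr(S)))$.

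First I would observe that for any $A \subseteq V(G)$ one has $\pr^{-1}(A) \cap V(G') = \psi(A)$, and that $\pr \circ \psi$ is the identity on $V(G)$ (indeed, both $\psi_0$ and $\psi_1$ are sections of $\pr$). Setting $K := co_G(\pr(S))$, which is convex in $G$ by definition, Corollary~\ref{C:conv.G0/conv.G1} immediately gives that $\psi(K)$ is convex in $G'$. This is the main ingredient and where the previously established machinery (Lemma~\ref{L:interv.}) does the real work of showing that expansions preserve convexity of projected sets.

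Next I would check the inclusion $S \subseteq \psi(K)$. Any $(x,i) \in S$ projects to $x = \pr(x,i) \in \pr(S) \subseteq K$, and since $(x,i) \in V(G')$ forces $x \in V_i$, we get $(x,i) = \psi_i(x) \in \psi_i(K \cap V_i) \subseteq \psi(K)$. Combined with the convexity of $\psi(K)$ in $G'$, this yields
\begin{equation*}
co_{G'}(S) \subseteq \psi(K) = \psi\bigl(co_G(\pr(S))\bigr).
\end{equation*}

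Finally, applying $\pr$ to both sides and using $\pr \circ \psi = \mathrm{id}_{V(G)}$ gives
\begin{equation*}
\pr(co_{G'}(S)) \subseteq \pr\bigl(\psi(co_G(\pr(S)))\bigr) = co_G(\pr(S)),
\end{equation*}
which is the desired statement. I do not anticipate a real obstacle here: the proof is essentially a bookkeeping exercise once Corollary~\ref{C:conv.G0/conv.G1} is in hand, the only point worth mild care being the verification that $\psi$ and $\pr$ interact as expected on the subsets $V'_0$, $V'_1$ of $V(G')$.
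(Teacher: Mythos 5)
Your argument is correct and is essentially the paper's own proof: both show $S \subseteq \psi(co_G(\pr(S)))$, invoke Corollary~\ref{C:conv.G0/conv.G1} for the convexity of $\psi(co_G(\pr(S)))$ in $G'$, and then apply $\pr$ using $\pr\circ\psi = \mathrm{id}$. The extra bookkeeping you spell out (e.g. $\pr^{-1}(A)\cap V(G') = \psi(A)$) is harmless but not needed.
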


\begin{proof}
We have $$S \subseteq \psi(\pr(S)) \subseteq \psi(co_G(\pr(S))).$$By Corollary~\ref{C:conv.G0/conv.G1}, $\psi(co_G(\pr(S)))$ is convex in $G'$.  Hence $co_{G'}(S) \subseteq \psi(co_G(\pr(S)))$.  Therefore $\pr(co_{G'}(S)) \subseteq co_G(\pr(S))$.
\end{proof}

\begin{lem}\label{L:conv.G1/conv.G0}
Let $K'$ be a convex set of $G'$ which meets both $V'_{0}$ and
$V'_{1}$.  Then $K := \textnormal{pr}(K')$ is a 
convex set of $G$.
\end{lem}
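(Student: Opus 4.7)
My plan is to decompose $K'$ along the two complementary half-spaces $V'_0$ and $V'_1$ of $G'$, use Lemma~\ref{L:interv.} to reduce convexity of $K$ to convexity of the two resulting pieces in $G[V_0]$ and $G[V_1]$, and close the only nontrivial gap via Lemma~\ref{L:bip.gr./convex}.

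Set $L'_i := K' \cap V'_i$ and $L_i := \pr(L'_i) = \psi_i^{-1}(L'_i)$, so that $K = L_0 \cup L_1$. Since $V'_0, V'_1$ are convex, each $L'_i$ is convex in $G'$, and Lemma~\ref{L:interv.}(i) then yields that each $L_i$ is convex in $G[V_i]$. The first observation is $L_0 \cap L_1 \neq \emptyset$: any $G'$-geodesic between a vertex of $L'_0$ and a vertex of $L'_1$ lies in $K'$ by convexity and must cross sides via some edge $\psi_0(v)\psi_1(v)$ with $v \in V_0 \cap V_1$, which places $v$ in both $L_0$ and $L_1$.

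To prove $I_G(x,y) \subseteq K$ for $x,y \in K$, I split into cases. If $x \in L_0$ and $y \in L_1$ (or vice versa), then $\psi_0(x), \psi_1(y) \in K'$ and Lemma~\ref{L:interv.}(ii) gives $I_{G'}(\psi_0(x),\psi_1(y)) = \psi(I_G(x,y)) \subseteq K'$, hence $I_G(x,y) \subseteq \pr(K') = K$. If instead $x,y \in L_0$ (the case of $L_1$ is symmetric) and $z \in I_G(x,y) \cap V_0$, the isometry of $G[V_0]$ in $G$ gives $z \in I_{G[V_0]}(x,y) \subseteq L_0$ by convexity.

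The main obstacle is the remaining case: $x,y \in L_0$ with $z \in I_G(x,y) \cap (V_1 - V_0)$. Fix a geodesic $P$ from $x$ to $y$ through $z$; because no edge joins $V_0 - V_1$ to $V_1 - V_0$, $P$ has a last vertex $a' \in V_0 \cap V_1$ before entering $V_1 - V_0$ and a first vertex $b' \in V_0 \cap V_1$ after leaving it, and $P[a',b']$ lies entirely in $V_1$. From $a',b' \in I_G(x,y) \cap V_0 = I_{G[V_0]}(x,y)$ and convexity of $L_0$ I conclude $\psi_0(a'), \psi_0(b') \in K'$. The crucial upgrade is to $\psi_1(a'), \psi_1(b') \in K'$: assuming $\psi_1(a') \notin K'$, the edge $\psi_0(a')\psi_1(a')$ belongs to $\partial_{G'}(K')$ with $\psi_0(a') \in K'$, so Lemma~\ref{L:bip.gr./convex} applied to the bipartite graph $G'$ forces $K' \subseteq W^{G'}_{\psi_0(a')\psi_1(a')}$; picking $v \in L_0 \cap L_1$ and invoking Lemma~\ref{L:interv.} to compute $d_{G'}(\psi_1(v),\psi_0(a')) = d_G(v,a') + 1 > d_G(v,a') = d_{G'}(\psi_1(v),\psi_1(a'))$ contradicts $\psi_1(v) \in K'$. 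The same argument handles $b'$, and then convexity of $L_1$ in $G[V_1]$ applied to $z \in I_{G[V_1]}(a',b')$ (again by isometry of $G[V_1]$) yields $z \in L_1 \subseteq K$.
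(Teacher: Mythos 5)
Your proof is correct, and its skeleton matches the paper's: the mixed case is handled exactly as in the paper via Lemma~\ref{L:interv.}(ii), and the same-side case likewise hinges on locating the crossing vertices of a geodesic in $V_0\cap V_1$, getting both of their lifts into $K'$ by exploiting a vertex of $K'$ on the other side, and then transferring the $V_1$-segment using isometry of $G[V_1]$. Where you diverge is in two local choices. First, you package the $V_0$-side through convexity of $L_i=\pr(K'\cap V'_i)$ in $G[V_i]$ (derived from the interval identity in Lemma~\ref{L:interv.}(i)), whereas the paper reroutes the given geodesic inside $G[V_0]$ and lifts it by $\psi_0$ to a $G'$-geodesic between two vertices of $K'$; these are the same mechanism in different clothing. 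Second, and more substantially, your ``crucial upgrade'' $\psi_1(a')\in K'$ is proved by contradiction via the edge-boundary Lemma~\ref{L:bip.gr./convex} together with the distance count $d_{G'}(\psi_1(v),\psi_0(a'))=d_G(v,a')+1>d_{G'}(\psi_1(v),\psi_1(a'))$; the paper gets the same conclusion in one line by observing that $\psi_1(a')\in I_{G'}(\psi_0(a'),w)=\psi\bigl(I_G(a',\pr(w))\bigr)$ for any $w\in K'\cap V'_1$ (since $a'\in V_1$), so convexity of $K'$ applies directly. Your detour is sound — it costs a half-space argument where an interval membership suffices — and in exchange your pointwise treatment of $z\in I_G(x,y)$ avoids the explicit geodesic-rerouting construction.
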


\begin{proof}
Let $u, v \in K$.  If $i(u) \neq i(v)$, then $I_{G}(u,v) =
\textnormal{pr}(I_{G'}(u',v'))$ by
Lemma~\ref{L:interv.}, and hence $I_{G}(u,v) \subseteq K$.

Now assume that $i(u) = i(v)$, say $i(u) = i(v) = 0$.  Let $P =
\langle x_{0},\ldots,x_{n} \rangle$ be a $(u,v)$-geodesic in $G$ with
$x_{0} = u$ and $x_{n} = v$.  In general, not all of $P$ is contained
in $G[V_{0}]$.  Let $0 = i_{0} < i_{1} < \ldots < i_{2p + 1} = n$ be
subscripts such that the segments $P[x_{i_{0}},x_{i_{1}}],
P[x_{i_{1}},x_{i_{2}}],\ldots, P[x_{i_{2p}},x_{i_{2p+1}}]$ are
alternatively contained in $G[V_{0}]$ and $G[V_{1}]$.  Thus
$x_{i_{1}}, \ldots, x_{i_{2p}} \in V_{0} \cap V_{1}$.  Since
$G[V_{0}]$ is isometric in $G$ there is an
$(x_{i_{2h-1}},x_{i_{2h}})$-geodesic $P_{h}$ in $G[V_{0}]$, $h =
1,\ldots, p$.  Replacing each $(x_{i_{2h-1}},x_{i_{2h}})$-segment of
$P$ by the corresponding $P_{h}$ one obtains a new $(u,v)$-geodesic
$P_{0}$ with $V(P_{0}) \subseteq V_{0}$.  Hence $\psi_{0}(P_{0})$ is a
$(u',v')$-geodesic in $G'$, and therefore $V(P_{0}) \subseteq K$.

It follows in particular that $\psi_{0}(x_{i_{k}}) \in K' \cap V'_{0}$,
$k = 1, \ldots, 2p$.  By hypothesis there exists a vertex $w \in K'
\cap V'_{1}$.  From the construction of $G'$ it then follows that
$y_{k} := \psi_{1}(x_{i_{k}}) \in I_{G'}(\psi_{0}(x_{i_{k}}),w)$, and
hence $y_{k} \in K'$.  Since $G[V_{1}]$ is an isometric subgraph of
$G$ we deduce that $\psi_{1}(P[x_{i_{2k-1}},x_{i_{2k}}])$ is a
$(y_{2k-1},y_{2k})$-geodesic.  Hence $V(P[x_{i_{2k-1}},x_{i_{2k}}])
\subseteq K$, and therefore $V(P) \subseteq K$.
\end{proof}

\begin{lem}\label{L:copoint-G1/copoint-G0}
If $K'$ is a copoint of $G'$ which meets both $V'_{0}$ and $V'_{1}$,
then $K := \textnormal{pr}(K')$ is a copoint of $G$ such that
$\mathrm{Att}(K) = \textnormal{pr}(\mathrm{Att'}(K'))$ (where
$\mathrm{Att}$ and $\mathrm{Att'}$ denote the sets of
attaching points in $G$ and $G'$, respectively).
\end{lem}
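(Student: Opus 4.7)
The plan is to prove three nested claims, the first of which is a bridge lemma and the crux of the argument: every attaching point $y'$ of $K'$ projects outside $K$, i.e., $\pr(y') \notin K$. Suppose instead $y' = (y,i) \in \mathrm{Att}'(K')$ with $y \in K$. Since $y' \notin K'$, the other preimage $(y,1-i)$ must lie in $K'$, which forces $y \in V_0 \cap V_1$ and places the edge $(y,1-i)(y,i)$ in $\partial_{G'}(K')$ with $(y,1-i) \in K'$. Lemma~\ref{L:bip.gr./convex} then gives $K' \subseteq W_{(y,1-i)(y,i)}^{G'}$, and a short computation from Lemma~\ref{L:interv.}(i) and (ii) yields $W_{(y,1-i)(y,i)}^{G'} = V'_{1-i}$: every $(u,1-i) \in V'_{1-i}$ lies at distance $d_G(u,y)$ from $(y,1-i)$ and $d_G(u,y)+1$ from $(y,i)$, while every $(u,i) \in V'_i$ has the inequality reversed. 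Hence $K' \subseteq V'_{1-i}$, contradicting $K' \cap V'_i \neq \emptyset$.

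With the bridge claim in hand, the copoint property of $K$ follows by shuttling maximality through $\psi$ and $\pr$. Fix $z' \in \mathrm{Att}'(K')$ and set $z := \pr(z')$; then $z \notin K$ by the bridge claim, and Lemma~\ref{L:conv.G1/conv.G0} already tells us that $K := \pr(K')$ is convex in $G$. If some convex $L \supseteq K$ in $G$ missed $z$, then by Corollary~\ref{C:conv.G0/conv.G1} the set $\psi(L)$ would be convex in $G'$, would contain $\psi(K) \supseteq K'$, and would miss $z'$ (because $z \notin L$); the maximality of $K'$ would then force $\psi(L) = K'$, whence $L = \pr(\psi(L)) = K$. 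So $K$ is a copoint at $z$. The identical argument applied to any $w' \in \mathrm{Att}'(K')$ (with $w := \pr(w') \notin K$ by the bridge claim) shows $w \in \mathrm{Att}(K)$, giving $\pr(\mathrm{Att}'(K')) \subseteq \mathrm{Att}(K)$.

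For the reverse inclusion take $w \in \mathrm{Att}(K)$, choose any $i$ with $w \in V_i$, and put $w' := (w,i) \notin K'$. Suppose for contradiction that $L' \supsetneq K'$ is convex in $G'$ and misses $w'$. If $w \in V_0 \cap V_1$, the same boundary argument as in the bridge claim (now applied to $L'$) rules out $(w,1-i) \in L'$: otherwise the edge $(w,1-i)(w,i)$ would lie in $\partial_{G'}(L')$ and force $L' \subseteq V'_{1-i}$, contradicting $L' \supseteq K'$ meeting $V'_i$. Hence $L'$ misses every preimage of $w$, so $w \notin \pr(L')$. Since $L' \supseteq K'$ meets both halves, Lemma~\ref{L:conv.G1/conv.G0} makes $\pr(L')$ convex; it contains $K$ and misses $w$, so maximality of $K$ at $w$ gives $\pr(L') = K$, i.e.\ $L' \subseteq \psi(K)$. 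But $z \notin K$ implies $z' \notin \psi(K)$, so $L'$ also misses $z'$, contradicting the maximality of $K'$. Hence $w' \in \mathrm{Att}'(K')$ and $w = \pr(w') \in \pr(\mathrm{Att}'(K'))$, finishing the proof. The main obstacle is the bridge claim, where the hypothesis that $K'$ meets both $V'_0$ and $V'_1$ enters in a non-trivial way; once it is available, everything else is a routine back-and-forth through $\psi$ and $\pr$.
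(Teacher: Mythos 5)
Your argument is correct, and its overall skeleton --- transferring convex sets back and forth through $\psi$ and $\pr$ via Corollary~\ref{C:conv.G0/conv.G1} and Lemma~\ref{L:conv.G1/conv.G0} and playing the two maximalities against each other --- is the same as the paper's; the proof that $K$ is a copoint at the projection of an attaching point of $K'$ is essentially identical. There are two real differences. First, your bridge claim supplies an explicit proof that attaching points of $K'$ project outside $K$, a statement the paper simply asserts ($x := \pr(u) \notin K$): your argument (if $(y,1-i)\in K'$ and $(y,i)\notin K'$ then the edge $(y,1-i)(y,i)$ lies in $\partial_{G'}(K')$, so Lemma~\ref{L:bip.gr./convex} forces $K'\subseteq W^{G'}_{(y,1-i)(y,i)}=V'_{1-i}$, contradicting that $K'$ meets $V'_i$) is exactly the right justification and is the only place where the hypothesis that $K'$ meets both $V'_0$ and $V'_1$ enters; note that the computation $W^{G'}_{(y,1-i)(y,i)}=V'_{1-i}$ is just the fact, recorded in the paper's notation paragraph, that $V'_0$ and $V'_1$ are complementary half-spaces of $G'$, and the same device is used in Case~1 of the proof of Lemma~\ref{L:G'Att-conV;=>GAtt-conv.}. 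Second, for the inclusion $\mathrm{Att}(K)\subseteq\pr(\mathrm{Att'}(K'))$ the paper is much quicker: since $K'\cup\mathrm{Att'}(K')=co_{G'}(\{u\}\cup K')$, Lemma~\ref{L:conv.G1/conv.G0} makes $\pr(co_{G'}(\{u\}\cup K'))=K\cup\pr(\mathrm{Att'}(K'))$ a convex set of $G$ containing $K\cup\{x\}$, hence containing $co_G(K\cup\{x\})=K\cup\mathrm{Att}(K)$, and the inclusion follows because $\mathrm{Att}(K)$ is disjoint from $K$. You instead lift each $w\in\mathrm{Att}(K)$ to $w'=(w,i)$ and run a second maximality argument (reusing the half-space computation and the fixed attaching point $z'$) to show $K'$ is a copoint at $w'$; this is longer but sound, and as a small bonus it yields $\psi(\mathrm{Att}(K))\subseteq\mathrm{Att'}(K')$, i.e.\ every preimage of an attaching point of $K$ is an attaching point of $K'$, which is slightly more than the lemma asks for.
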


\begin{proof}      
Let $u \in \mathrm{Att'}(K')$ and abbreviate $i(u)$ by $i$.  Thus
$u \in V'_{i}$.  By Lemma~\ref{L:conv.G1/conv.G0}, $K$ is a convex set
of $G$.  Moreover $x := \textnormal{pr}(u) \notin K$.  Suppose that
$K$ is not a copoint at $x$.  Then $G$ contains a convex set $K_{0}$
with $x \notin K_{0}$ and $K \subseteq K_{0}$.  By
Corollary~\ref{C:conv.G0/conv.G1}, $\psi(K_{0})$ is a convex set of
$G'$ which strictly contains $K'$.  Hence $u \in \psi(K_{0})$ because
$K'$ is a copoint at $u$, contrary to the fact that $x \notin K_{0}$.
Consequently $K$ is a copoint at $x$.

It follows that $\textnormal{pr}(\mathrm{Att'}(K')) \subseteq
\mathrm{Att}(K)$.  On the other hand, by
Lemma~\ref{L:conv.G1/conv.G0}, $\textnormal{pr}(co_{G'}(\{u \} \cup
K'))$ is a convex set of $G$ containing $\{x \} \cup K$.  Hence
$\mathrm{Att}(K) \subseteq \textnormal{pr}(\mathrm{Att'}(K'))$.
\end{proof}

Going from $G$ to $G'$ we have:

\begin{lem}\label{L:cop.G0/cop.G1}
If $K$ is a copoint of $G$ which meets $V_{0} \cap V_{1}$, then
$K' := \psi(K)$ is a copoint of $G'$ such that $\mathrm{Att'}(K') =
\psi(\mathrm{Att}(K))$.
\end{lem}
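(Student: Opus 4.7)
The plan mirrors Lemma~\ref{L:copoint-G1/copoint-G0} in the reverse direction, the extra wrinkle being that any vertex of $V_0 \cap V_1$ admits two lifts $\psi_0(v), \psi_1(v)$ in $G'$, joined by a $\Theta$-edge, which will have to be exploited repeatedly. Convexity of $K' := \psi(K)$ is Corollary~\ref{C:conv.G0/conv.G1}, and picking $y \in K \cap V_0 \cap V_1$ yields $\psi_0(y), \psi_1(y) \in K'$, so $K'$ meets both $V'_0$ and $V'_1$.

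Let $x$ be an attaching point of $K$; by symmetry assume $x \in V_0$ and put $u := \psi_0(x) \notin K'$. To prove $K'$ is a copoint of $G'$ at $u$, let $L' \supsetneq K'$ be convex in $G'$ with $u \notin L'$, and set $L := \pr(L')$. Since $L'$ meets both half-spaces, Lemma~\ref{L:conv.G1/conv.G0} gives that $L$ is convex. I claim $x \notin L$: the only nontrivial case is $x \in V_0 \cap V_1$, where the isometry of $G[V_0]$ and the $\Theta$-edge $\psi_0(x)\psi_1(x)$ give
\[ d_{G'}(\psi_1(x), \psi_0(y)) = 1 + d_G(x,y) = d_{G'}(\psi_1(x),\psi_0(x)) + d_{G'}(\psi_0(x),\psi_0(y)), \]
so $\psi_0(x) \in I_{G'}(\psi_1(x),\psi_0(y))$; since $\psi_0(y) \in L'$ and $\psi_0(x) \notin L'$, convexity of $L'$ forces $\psi_1(x) \notin L'$, whence $x \notin L$. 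Finally, any $w \in L' \setminus K'$ satisfies $\pr(w) \notin K$ (otherwise $w = \psi_{i(w)}(\pr(w)) \in \psi(K) = K'$), so $L \supsetneq K$, contradicting the maximality of $K$ as a convex set avoiding $x$.

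For the equality $\mathrm{Att'}(K') = \psi(\mathrm{Att}(K))$: the inclusion $\subseteq$ follows from Lemma~\ref{L:conv.hull}, since every $w \in \mathrm{Att'}(K') = co_{G'}(K' \cup \{u\}) - K'$ projects to $co_G(K \cup \{x\}) - K = \mathrm{Att}(K)$, and $w = \psi_{i(w)}(\pr(w))$. For $\supseteq$, pick $w = \psi_i(v)$ with $v \in \mathrm{Att}(K) \cap V_i$ and let $L' \supsetneq K'$ be any convex set; rerunning the argument of the previous paragraph, $L := \pr(L')$ is convex and strictly contains $K$, hence by maximality of $K$ it contains $x$, so $L \supseteq co_G(K \cup \{x\}) \supseteq \mathrm{Att}(K) \ni v$; some lift of $v$ therefore lies in $L'$, and if $v \in V_0 \cap V_1$ the same $\Theta$-edge computation shows that $\psi_{1-i}(v) \in I_{G'}(\psi_i(v),\psi_{1-i}(y)) \subseteq L'$, forcing both lifts of $v$ into $L'$. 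Either way $w \in L'$, so $K'$ is maximal convex avoiding $w$, that is $w \in \mathrm{Att'}(K')$. The main obstacle throughout is precisely this $\Theta$-edge swap, and it is the only place where the hypothesis $K \cap V_0 \cap V_1 \neq \emptyset$ is genuinely used.
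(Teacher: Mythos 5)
Your proof is correct, but it takes a noticeably different route from the paper. The paper first extends $\psi(K)$ to a copoint $K''$ of $G'$ at $\psi_{i(x)}(x)$ (using the closure-system property to get copoint extensions), then invokes the already-proved Lemma~\ref{L:copoint-G1/copoint-G0}: since $K''$ meets both $V'_0$ and $V'_1$, its projection is a copoint of $G$ at $x$ containing $K$, hence equals $K$, forcing $K''=\psi(K)$; the inclusion $\mathrm{Att'}(\psi(K))\subseteq\psi(\mathrm{Att}(K))$ again comes from Lemma~\ref{L:copoint-G1/copoint-G0}, and the reverse inclusion is dispatched in one terse sentence. You instead bypass both the copoint-extension step and Lemma~\ref{L:copoint-G1/copoint-G0} entirely: you verify maximality of $\psi(K)$ directly by projecting an arbitrary convex $L'\supsetneq K'$ via Lemma~\ref{L:conv.G1/conv.G0} and appealing to the maximality of $K$ in $G$, with the two lifts of a vertex of $V_0\cap V_1$ handled by the explicit interval computation through a fixed $y\in K\cap V_0\cap V_1$ (this is exactly where the hypothesis enters, as you say); and you get $\mathrm{Att'}(K')\subseteq\psi(\mathrm{Att}(K))$ from Lemma~\ref{L:conv.hull} rather than from the copoint-projection lemma. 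What your version buys is a fully explicit proof of the reverse inclusion $\psi(\mathrm{Att}(K))\subseteq\mathrm{Att'}(K')$, which the paper leaves essentially to the reader, and independence from Lemma~\ref{L:copoint-G1/copoint-G0}; what it costs is length, since the double-lift computation is rerun for both the copoint claim and the attaching-point claim, whereas the paper's two-line reduction reuses earlier work. One cosmetic point: in your last paragraph the displayed inclusion $\psi_{1-i}(v)\in I_{G'}(\psi_i(v),\psi_{1-i}(y))$ is written for the case where $\psi_i(v)$ is the lift already known to lie in $L'$, while the case you actually need may be the mirror one (only $\psi_{1-i}(v)\in L'$, and you then want $\psi_i(v)\in I_{G'}(\psi_{1-i}(v),\psi_i(y))$); since $\psi_0(y)$ and $\psi_1(y)$ both lie in $K'$ the computation is symmetric, so this is an index slip, not a gap.
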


\begin{proof}
Let $x \in \mathrm{Att}(K)$.  By
Corollary~\ref{C:conv.G0/conv.G1}, $\psi(K)$ is a convex set of $G'$
such that $\psi_{i(x)}(x) \notin \psi(K)$.  Let $K'$ be a copoint of
$G'$ at $\psi_{i(x)}(x)$ which contains $\psi(K)$.  Then $K' \cap
V_{i}^{1} \neq \emptyset$ for $i = 0, 1$.  By
Lemma~\ref{L:copoint-G1/copoint-G0}, $\textnormal{pr}(K')$ is a
copoint of $G$ at $x$ which contains $K$, and thus is equal to $K$.
Hence $K' = \psi(K)$.  Now $\mathrm{Att'}(\psi(K)) \subseteq
\psi(\mathrm{Att}(K))$ by Lemma~\ref{L:copoint-G1/copoint-G0}, and
moreover $(u,0), (u,1) \in \psi(K)$ for each $u \in K \cap V_{0} \cap
V_{1}$.  It follows that $\mathrm{Att'}(\psi(K)) =
\psi(\mathrm{Att}(K))$.
\end{proof}

\begin{lem}\label{L:G'Att-conV;=>GAtt-conv.}
$G$ is Att-convex if so is $G'$.
\end{lem}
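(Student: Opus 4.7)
The plan is, given a copoint $K$ of $G$ with attaching point $x$, to lift $K$ to a copoint $K'$ of $G'$ whose attaching set is linked to $\mathrm{Att}(K)$ by the projection $\mathrm{pr}$, then invoke Att-convexity of $G'$ and transfer the convexity back via Lemmas~\ref{L:interv.} and~\ref{L:conv.G1/conv.G0}. I would split on whether $K$ meets $V_0 \cap V_1$: in the main case Lemma~\ref{L:cop.G0/cop.G1} supplies $K' := \psi(K)$ as a copoint of $G'$ with $\mathrm{Att'}(K') = \psi(\mathrm{Att}(K))$; in the complementary case $K \subseteq V_0 - V_1$ (symmetric to $K \subseteq V_1 - V_0$) I pick an attaching point $x \in \mathrm{Att}(K) \cap V_0$ (existing because any geodesic from $K$ to an attaching point outside $V_0$ must cross $V_0 \cap V_1$ at a point of $\mathrm{Att}(K)$), extend $\psi_0(K)$ to a copoint $K'$ of $G'$ at $\psi_0(x)$, and argue $K' \subseteq V'_0$: for if $K'$ met $V'_1$, then Lemma~\ref{L:copoint-G1/copoint-G0} would make $\mathrm{pr}(K')$ a convex proper extension of $K$ in $G$ still avoiding $x$, contradicting the copoint property of $K$.

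The hypothesis then makes $\mathrm{Att'}(K')$ convex in $G'$. To see that $\mathrm{Att}(K)$ is convex in $G$, take $u,v \in \mathrm{Att}(K)$ and $w \in I_G(u,v)$; since $K \cup \mathrm{Att}(K) = co_G(K \cup \{x\})$ is convex, $w \in K \cup \mathrm{Att}(K)$, so it suffices to rule out $w \in K$. I would choose lifts $u',v' \in \mathrm{Att'}(K')$ on opposite sides of $(V'_0, V'_1)$ whenever possible: Lemma~\ref{L:interv.}(ii) then gives $\mathrm{pr}(I_{G'}(u',v')) = I_G(u,v)$, placing some lift of $w$ in $I_{G'}(u',v') \subseteq \mathrm{Att'}(K')$; since any lift of a vertex of $K$ lies in $\psi(K) \subseteq K'$ and $K'$ is disjoint from $\mathrm{Att'}(K')$, this excludes $w \in K$.

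The chief obstacle is the sub-case $u, v \in V_0 - V_1$, where only same-side lifts to $V'_0$ are available and Lemma~\ref{L:interv.}(i) controls just $I_G(u,v) \cap V_0$. I would fix any $(u,v)$-geodesic $P$ in $G$ passing through a candidate $w \in V_1 - V_0$ and identify the vertices $p,q \in V_0 \cap V_1$ where $P$ enters and exits the $V_1 - V_0$ segment. These lie in $I_G(u,v) \cap V_0 = I_{G[V_0]}(u,v)$, hence in $\mathrm{Att}(K)$ because $\mathrm{Att}(K) \cap V_0$ is convex in $G[V_0]$ (being $\psi_0^{-1}$ of the convex set $\mathrm{Att'}(K') \cap V'_0$ of $G'$). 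Their $\psi_1$-lifts then belong to $\mathrm{Att'}(K') \cap V'_1$, and since $w \in I_G(p,q) \cap V_1 = I_{G[V_1]}(p,q)$, Lemma~\ref{L:interv.}(i) gives $\psi_1(w) \in I_{G'}(\psi_1(p),\psi_1(q)) \subseteq \mathrm{Att'}(K')$. But $w \in K$ would place $\psi_1(w) \in \psi(K) \subseteq K'$, contradicting disjointness, so $w \notin K$ and $\mathrm{Att}(K)$ is convex in $G$.
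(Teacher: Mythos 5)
Your first case ($K$ meets $V_0\cap V_1$) is sound and essentially parallels the paper's: Lemma~\ref{L:cop.G0/cop.G1} gives $K'=\psi(K)$ with $\mathrm{Att'}(K')=\psi(\mathrm{Att}(K))$, and your interval-transfer argument (opposite-side lifts via Lemma~\ref{L:interv.}(ii), and the gateway vertices $p,q\in V_0\cap V_1$ with Lemma~\ref{L:interv.}(i) for same-side pairs) correctly rules out $w\in K$ there; in fact it is a more explicit version of what the paper does in its Case~2. The genuine gap is in your other case, $K\subseteq V_0-V_1$, which is precisely where the paper does most of its work. After you construct the copoint $K'$ at $\psi_0(x)$ and show $K'\subseteq V'_0$ (the analogue of the paper's fact~(\ref{E:K'/v'1})), you never establish any relation between $\mathrm{Att}(K)$ and $\mathrm{Att'}(K')$, yet the rest of your argument silently uses several such relations: that lifts of $u,v\in\mathrm{Att}(K)$ lie in $\mathrm{Att'}(K')$, that $\mathrm{Att}(K)\cap V_0=\psi_0^{-1}(\mathrm{Att'}(K')\cap V'_0)$, and that $\psi_1(p),\psi_1(q),\psi_1(w)\in\mathrm{Att'}(K')\cap V'_1$. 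The last of these cannot hold: since $K'\cup\{\psi_0(x)\}\subseteq V'_0$ and $V'_0$ is a convex half-space of $G'$, we have $co_{G'}(K'\cup\{\psi_0(x)\})\subseteq V'_0$, so $\mathrm{Att'}(K')\cap V'_1=\emptyset$; thus the mechanism of your third paragraph is unavailable in this case, and the "opposite-side lifts" of your second paragraph are never available either. Moreover attaching points of $K$ lying in $V_1-V_0$ have only $\psi_1$-lifts, which therefore never lie in $\mathrm{Att'}(K')$, so geodesics joining such points are not treated at all.

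What is missing is exactly the content of the paper's displayed facts (\ref{E:Att'(K')/v'1}) and (\ref{E:A/Att'(K')}) and its Subcases~1.1/1.2: one must prove that the same-side lifts of attaching points of $K$ do land in $\mathrm{Att'}(K')$ (this requires an argument about $co_{G'}(K'\cup\{\psi_0(\cdot)\})$, not just maximality of $K'$), and then handle the attaching points on the far side separately, using the convexity of $\psi(K\cup\mathrm{Att}(K))$ to show that the far-side part of $\psi(\mathrm{Att}(K))$ is convex, gluing the two halves via Lemma~\ref{L:interv.}, and projecting back with Lemma~\ref{L:conv.G1/conv.G0}. Your copoint-lifting and $K'\subseteq V'_0$ step is a correct start, but as written the case $K\subseteq V_0-V_1$ does not go through, and it cannot be repaired by the lifting scheme you propose because $\mathrm{Att'}(K')$ lives entirely in $V'_0$.
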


\begin{proof}
Assume that $G'$ is Att-convex.  Let $K$ be a copoint of $G$.  We will show that $\mathrm{Att}(K)$ is 
convex.  We distinguish two cases.

\emph{Case 1.}\;  $K \cap V_{i} = \emptyset$ for some $i \in \{0,1 \}$.

Say $i = 0$.  Hence $K \subseteq V_{1} - V_{0}$.  Then
$K = \psi(K)$ is convex in $G'$ by Corollary~\ref{C:conv.G0/conv.G1}.  Let $A := \psi(\mathrm{Att}(K))$.  Then $K \cup A = \psi(K \cup \mathrm{Att}(K))$, and thus $K \cup A$ is convex in $G'$ by Corollary~\ref{C:conv.G0/conv.G1} since $K \cup \mathrm{Att}(K)$ is convex in $G$.

Let $u \in A \cap V'_1$, $u' = \psi_1(u)$, and let $K'$ be a copoint at $u'$ in $G'$ containing $K$ or equal to $K$.  Suppose that $K' \cap V'_{0} \neq \emptyset$.  Then, by Lemma~\ref{L:copoint-G1/copoint-G0}, $\pr(K')$ is a copoint at $u$ in $G$ with $K \subset K'$, contrary to the fact that $K$ is a copoint at $u$.  Therefore
\begin{equation}
\label{E:K'/v'1}
K' \subseteq V'_1.
\end{equation}

Suppose that $\mathrm{Att'}(K') \cap V'_0 \neq \emptyset$.  Then, because $K' \subseteq V'_1$ by (\ref{E:K'/v'1}) and since $V'_1$ is convex, there exists a vertex $x_0 \in \mathrm{Att'}(K') \cap V'_0 \cap N_{G'}(K')$.  Let $x_1$ be the neighbor of $x_0$ in $V'_1$.  Then $x_1 \in K'$ and $x_1 \in I_{G'}(u',x_0)$ by Lemma~\ref{L:bip.gr./convex}, contrary to the fact that $\mathrm{Att'}(K')$ is convex since $G'$ is Att-convex by assumption.  Therefore
\begin{equation}
\label{E:Att'(K')/v'1}
\mathrm{Att'}(K') \subseteq V'_1.
\end{equation}.

Suppose that $A \cap K' \neq \emptyset$, and let $x \in \mathrm{Att}(K)$ be such that $\psi_1(x) \in K'$.  Because $K \cup \mathrm{Att}(K) = co_{G}(K \cup \{x\})$ since $K$ is a copoint at $x$, it follows that $u \in co_{G}(K \cup \{x\})$.  Hence $u' \in co_{G'}(K \cup \{\psi_1(x)\}) \subseteq K'$, contrary to the facts that $K'$ is a copoint at $u'$.  Therefore 
\begin{equation}
\label{E:A/Att'(K')}
A \cap V'_1 \subseteq \mathrm{Att'}(K').
\end{equation}

 We distinguish two subcases.

\emph{Subcase 1.1.}\; $(K \cup \mathrm{Att}(K)) \cap V_0 = \emptyset$.

Then $\mathrm{Att}(K) = \pr(A) = A \subseteq \mathrm{Att'}(K')$  by (\ref{E:A/Att'(K')}).  Hence $co_G(\mathrm{Att}(K)) \cap K = \emptyset$ since $\mathrm{Att'}(K')$ is convex and disjoint from $K'$, and thus from $K$.  Therefore $\mathrm{Att}(K)$ is convex since so is $K \cup \mathrm{Att}(K)$.\\

\emph{Subcase 1.2.}\; $(K \cup \mathrm{Att}(K)) \cap V_0 \neq \emptyset$.

Then $\mathrm{Att}(K) \cap V_0 \cap V_1 \neq \emptyset$, and thus $A \cap V'_i \neq \emptyset$ for $i = 0, 1$.  The set $A \cap V'_0$, which is equal to $(K \cup A) \cap V'_0$, and the set $(K \cup A) \cap V'_1$ are convex since so are the sets $K \cup A$,\; $V'_0$ and $V'_1$.  By (\ref{E:A/Att'(K')}) and the fact that $\mathrm{Att'}(K')$ is convex since $G'$ is Att-convex by assumption, we infer that $co_{G'}(A \cap V'_1) \subseteq \mathrm{Att'}(K')$, and thus $co_{G'}(A \cap V'_1) \cap K = \emptyset$ since $K \subseteq K'$.  Because $(K \cup A) \cap V'_1 = K \cup (A \cap V'_1)$ is convex, it follows that $A \cap V'_1$ is also convex.  Hence $A$, which is equal to the union of the two convex sets $A \cap V'_0$ and $A \cap V'_1$, is convex by Lemma~\ref{L:interv.}.  Therefore $\mathrm{Att}(K) = \pr(A)$ is convex by Lemma~\ref{L:conv.G1/conv.G0} since $A \cap V'_i \neq \emptyset$ for $i = 0, 1$.\\

\emph{Case 2.}\; $K \cap V_{0} \cap V_{1} \neq \emptyset$.

By Lemma~\ref{L:cop.G0/cop.G1}, $K' := \psi(K)$ is a copoint of $G'$
such that $\mathrm{Att'}(K') = \psi(\mathrm{Att}(K))$.  The set 
$\mathrm{Att'}(K')$ is convex because $G'$ is Att-convex by assumption.
Furthermore $K' \cap V'_{1} \neq \emptyset$ for $i = 0, 1$.  Hence, by
Lemma~\ref{L:copoint-G1/copoint-G0}, $\textnormal{pr}(K')$ is a
copoint of $G$ such that $\mathrm{Att}(\textnormal{pr}(K')) =
\textnormal{pr}(\mathrm{Att'}(K'))$.  Because $\mathrm{Att}(K)
\subseteq \textnormal{pr}(\mathrm{Att'}(K'))$ and since $K$ is a
copoint, it follows that $K = \textnormal{pr}(K')$ and
$\mathrm{Att}(K) = \textnormal{pr}(\mathrm{Att'}(K'))$.

If $\mathrm{Att'}(K') \cap V'_{i} \neq \emptyset$ for $i = 0, 1$,
then $\mathrm{Att}(K)$ is convex by Lemma~\ref{L:conv.G1/conv.G0} since $\mathrm{Att'}(K')$ is convex.  Suppose that $\mathrm{Att'}(K') \subseteq V'_{i}-V'_{1-i}$ for some $i = 0$ or $1$.  Then $\mathrm{Att}(K) =
\psi_{i}^{-1}(\mathrm{Att'}(K')) \subseteq V_{i}-V_{1-i}$.  It follows that
$\mathrm{Att}(K) = \mathrm{Att'}(K')$.  Therefore $\mathrm{Att}(K)$ is convex since so is $\mathrm{Att'}(K')$.\\

Consequently $G$ is Att-convex.
\end{proof}

\begin{lem}\label{L:inf.p.c.}
A bipartite graph $G$ is a partial cube if and only if every polytope of $G$ induces a partial cube.
\end{lem}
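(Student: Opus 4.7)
The plan is as follows. For the forward direction, if $G$ is a partial cube and $P = co_G(A)$ is a polytope with $A$ finite, then $V(P)$ is convex in $G$, so geodesics between vertices of $V(P)$ stay inside $V(P)$ and $G[V(P)]$ is isometric in $G$. Since $G$ is isometric in some hypercube, so is $G[V(P)]$, whence $G[V(P)]$ is itself a partial cube.

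For the converse, I would verify condition (i) of Theorem~\ref{T:Djokovic-Winkler}. Fix an edge $ab \in E(G)$ and take $x, y \in W_{ab}$ together with $z \in I_G(x,y)$; it suffices to show $z \in W_{ab}$. Consider the polytope $P := co_G(\{a,b,x,y\})$. Since $V(P)$ is convex in $G$ it contains $z$, and $G[V(P)]$ is isometric in $G$; by hypothesis $G[V(P)]$ is a partial cube. Applying Lemma~\ref{L:prop.isom subgr.} to the isometric subgraph $G[V(P)]$, which contains the edge $ab$, yields $W_{ab}^{G[V(P)]} = W_{ab}^{G} \cap V(P)$, so $x$ and $y$ lie in $W_{ab}^{G[V(P)]}$. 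This set is convex in $G[V(P)]$ by Theorem~\ref{T:Djokovic-Winkler}(i), and isometry of $G[V(P)]$ in $G$ gives $I_{G}(x,y) = I_{G[V(P)]}(x,y)$, so $z$ lies in it and therefore in $W_{ab}^{G}$. A symmetric argument shows $W_{ba}$ is convex, and Theorem~\ref{T:Djokovic-Winkler}(i) finishes the backward direction.

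The only delicate point is that, before one knows $G$ to be a partial cube, a polytope of $G$ may well be infinite: the finiteness statement of Lemma~\ref{L:gen.propert.}(ii) only applies inside partial cubes. The argument sidesteps this by using only that polytopes are convex, hence induce isometric subgraphs, so the partial-cube structure assumed on $G[V(P)]$ transfers the convexity of $W_{ab}^{G[V(P)]}$ back to $G$ via Lemma~\ref{L:prop.isom subgr.}, regardless of the cardinality of $V(P)$.
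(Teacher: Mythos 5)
Your proof is correct, and it localizes to a polytope exactly as the paper does, but it checks the other half of the Djokovi\'c--Winkler characterization: the paper fixes an edge $ab$ and two edges $cd$, $ef$ each $\Theta$-related to $ab$, passes to the polytope $co_G(\{a,b,c,d,e,f\})$, uses transitivity of $\Theta$ inside that partial cube, and transfers the relation $cd \,\Theta\, ef$ back to $G$ by convexity, thereby verifying condition (ii) of Theorem~\ref{T:Djokovic-Winkler}; you instead verify condition (i) directly, showing $W_{ab}$ is convex by working inside the four-point polytope $co_G(\{a,b,x,y\})$. Both routes are sound and of the same length; yours has the small advantage of needing only the convexity of half-spaces in the local partial cube, while the paper's exploits transitivity, which makes the transfer back to $G$ a one-line distance computation. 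Two small touch-ups: Lemma~\ref{L:prop.isom subgr.} is stated under the hypothesis that the ambient graph is a partial cube, which is precisely what is not yet known for $G$, so you should instead invoke the underlying triviality directly --- $W_{ab}^{G[V(P)]}=W_{ab}^{G}\cap V(P)$ holds for any isometric subgraph because the distances agree; and the equality $I_{G}(x,y)=I_{G[V(P)]}(x,y)$ is not a consequence of isometry alone (isometry only gives $I_{G[V(P)]}(x,y)\subseteq I_{G}(x,y)$) but of the convexity of $V(P)$, which forces every $(x,y)$-geodesic of $G$ to lie in $V(P)$; since you have already established that convexity, this is a matter of attribution, not of substance. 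Your closing remark is also apt: neither argument needs the polytope to be finite, which is exactly why the lemma can be stated for arbitrary bipartite graphs rather than only for partial cubes.
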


\begin{proof}
We only have to prove the sufficiency.  Let $ab$ be an edge of $G$,
and let $cd$ and $ef$ be two other edges of $G$ such that each of them
is in relation $\Theta$ with $ab$.  Then the polytope $A := co_G(a,b,c,d,e,f)$ induces a partial cube $F$ by hypothesis.  Because $F$ is a convex subgraph of $G$, it follows that both the edges $cd$ and $ef$ are in relation $\Theta$ with $ab$ in $F$.  Because $F$ is a partial cube, we infer from Theorem~\ref{T:Djokovic-Winkler} that these edges are in relation $\Theta$ in $F$, and thus in $G$.  Consequently the relation $\Theta$ in $G$ is transitive, which proves that $G$ is a partial cube by Theorem~\ref{T:Djokovic-Winkler}.
\end{proof}

\begin{lem}\label{L:attach.pts/conv}
Any convex subgraph of an Att-convex graph is also Att-convex.
\end{lem}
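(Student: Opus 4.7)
My strategy is to lift a copoint $K$ of $F$ at a point $x$ to a copoint $\tilde K$ of $G$ at $x$ containing $K$, and then transfer convexity of $\mathrm{Att}_G(\tilde K)$ (which is granted by the Att-convexity hypothesis on $G$) to $\mathrm{Att}_F(K)$, using that $V(F)$ is convex in $G$ so that $F$-geodesic intervals and $F$-convex hulls coincide with their $G$-counterparts on subsets of $V(F)$.

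First I would fix a copoint $K$ of $F$ at some $x\in V(F)$. Applying Zorn's lemma to the family of $G$-convex subsets of $V(G)$ containing $K$ and missing $x$ produces a copoint $\tilde K$ of $G$ at $x$ with $K\subseteq \tilde K$. Since $V(F)$ is $G$-convex, $\tilde K\cap V(F)$ is $G$-convex, contains $K$, and avoids $x$; maximality of $K$ as a copoint of $F$ at $x$ forces $\tilde K\cap V(F)=K$. A routine consequence of $F$ being convex in $G$ is that $co_F(S)=co_G(S)$ and $I_F(u,v)=I_G(u,v)$ for every $S\subseteq V(F)$ and every $u,v\in V(F)$. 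In particular $\mathrm{Att}_F(K)=co_G(K\cup\{x\})-K\subseteq V(F)$.

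Next, given $u,v\in\mathrm{Att}_F(K)$, I would first show $u,v\in\mathrm{Att}_G(\tilde K)$. Indeed, $u,v\in co_G(K\cup\{x\})\subseteq co_G(\tilde K\cup\{x\})=\tilde K\cup\mathrm{Att}_G(\tilde K)$; and because $u,v\in V(F)$ lie outside $K=\tilde K\cap V(F)$, they lie outside $\tilde K$, so $u,v\in\mathrm{Att}_G(\tilde K)$. The Att-convexity of $G$ then yields $I_G(u,v)\subseteq\mathrm{Att}_G(\tilde K)$, so $I_G(u,v)\cap K=\emptyset$ since $K\subseteq\tilde K$ is disjoint from $\mathrm{Att}_G(\tilde K)$. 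Meanwhile, $I_G(u,v)\subseteq co_G(K\cup\{x\})=K\cup\mathrm{Att}_F(K)$, so combining these gives $I_G(u,v)\subseteq\mathrm{Att}_F(K)$. Since $I_F(u,v)=I_G(u,v)$, this exhibits $\mathrm{Att}_F(K)$ as convex in $F$.

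The only step with any subtlety is securing $\tilde K\cap V(F)=K$ in the lifting construction; once that identity is in hand, every subsequent inclusion is direct from the definitions of copoint and attaching point together with the hypothesis on $G$. No use of bipartiteness or of the yet-unproven Theorem~\ref{T:bip.gr./Att-convex} is needed, so the lemma fits naturally in its position before the proof of that theorem.
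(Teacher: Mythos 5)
Your proof is correct and follows essentially the same route as the paper: extend the copoint $K$ of the convex subgraph to a copoint $\tilde K$ of $G$ at the same attaching point, observe that $\tilde K\cap V(F)=K$ and that $\mathrm{Att}_F(K)$ sits inside the convex set $\mathrm{Att}_G(\tilde K)\cap V(F)$, which is disjoint from $K$, and then intersect with the convex set $K\cup\mathrm{Att}_F(K)$ to conclude. Your write-up merely makes explicit some steps (Zorn's lemma, $I_F=I_G$ on $V(F)$, the identity $\tilde K\cap V(F)=K$) that the paper leaves implicit.
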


\begin{proof}
Let $H$ be a convex subgraph of an Att-convex graph $G$, and let $K$ be a copoint at a vertex $x$ of $H$.  Then $K$ is convex in 
$G$, and thus it is contained in a copoint $K'$ at $x$ in $G$.  
Clearly $K = K' \cap V(H)$.  Moreover $\mathrm{Att}(K) \subseteq \mathrm{Att}(K') \cap V(H)$.  
Because $V(H)$ and $\mathrm{Att}(K')$ are convex in $G$, it follows that $\mathrm{Att}(K)$ is contained in a convex subset of $V(H)$ which does not meet $K$.  It follows that $\mathrm{Att}(K)$ is convex because so is $K \cup \mathrm{Att}(K)$.
\end{proof}

\begin{proof}[\textnormal{\textbf{Proof of
Theorem~\ref{T:bip.gr./Att-convex}}}]

We only have to prove the sufficiency because of~\cite[Theorem 6.7]{PS09} which in particular states that \emph{a connected bipartite graph $G$ is a partial cube if and only if it is Att-convex and $N_G(K) \subseteq \mathrm{Att}(K)$ for each copoint $K$ of $G$}.

\emph{Case 1.}\;  $G$ is finite.

The proof will be by induction on the order of $G$.  This is obvious if $G$ has one or two vertices since $K_{1}$ and $K_{2}$ are hypercubes..  Let $n \geq 2$.  Suppose that every connected bipartite graph whose order is at most $n$ and which is Att-convex is a partial cube.  Let $G$ be an Att-convex connected bipartite graph whose order is $n+1$.

Because $G$ is finite, there exists a copoint $K$ of $G$ which is
maximal with respect to inclusion.  Then $\mathrm{Att}(K) =
V(G)-K$, since otherwise there would exist a copoint at some vertex $x
\notin K \cup \mathrm{Att}(K)$ strictly containing $K$, contrary
to the maximality of $K$.  Because $G$ is Att-convex, $\mathrm{Att}(K)$ is
convex and thus $K$ is a half-space.  Therefore the edges in $\partial_G(K)$ are pairwise in relation $\Theta$.

Let $F$ be the graph obtained from $G$ by identifying,
for each edge between $K$ and $V(G)-K$, the endvertices of this
edge.  Clearly $G$ is an expansion of $F$.  Note that $F$ is a 
bipartite graph whose order is at most $n$, and that it is Att-convex by Lemma~\ref{L:G'Att-conV;=>GAtt-conv.}.  Hence $F$ is a partial cube by the induction hypothesis.  Therefore $G$ is also a partial cube by the properties of expansions.\\

\emph{Case 2.}\;  $G$ is infinite.

We denote by $\mathcal{C}$ the class
of all Att-convex connected bipartite graphs whose vertex set is a polytope.  Let $H \in \mathcal{C}$.  A subset $S$ of $V(H)$ such that $V(H) = co_H(S)$ is called a \emph{spanning set} of $H$.  We define:
\begin{align*}
d(S) &:= \Sigma_{x, y \in S}d_H(x,y)\\
d(H) &:= \min\{d(S): S \textnormal{\;is a finite spanning set of}\; H\}.
\end{align*}

\emph{Claim.\; Any $H \in \mathcal{C}$ is a finite partial cube.}

We first prove by induction on $d(H)$ that any $H \in \mathcal{C}$ is finite.  This is obvious if $d(H) = 0$ since $H = K_1$.  Let $n$ be a non-negative integer.  Suppose that any $H \in \mathcal{C}$ such $d(H) \leq n$ is finite.  Let $H \in \mathcal{C}$ be such that $d(H) = n+1$, and let $S$ be a finite spanning set of $H$ such that $d(S) = d(H)$.  By~\cite{V93}, $V(H)$ cannot be the union of a non-empty chain of proper convex subsets.  Hence $V(H)$ contains a maximal convex subset $K$.  Then $K$ is a copoint of any element of $V(H)-K$, i.e. $\mathrm{Att}(K) = V(H)-K$.  It follows that $K$ is a half-space since $\mathrm{Att}(K)$ is convex because $H$ is Att-convex by hypothesis.  Therefore the edges in $\partial_H(K)$ are pairwise in relation $\Theta$.

Let $F$ be the graph obtained from $H$ by identifying,
for each edge between $K$ and $V(H)-K$, the endvertices of this
edge.  Clearly $H$ is an expansion of $F$.  By Lemma~\ref{L:G'Att-conV;=>GAtt-conv.}, $F$ is Att-convex.  Let $S$ be a finite spanning set of $H$.  By Lemma~\ref{L:conv.hull} we have $$V(F) = \pr(V(H)) = \pr(co_H(S)) \subseteq co_F(\pr(S)) \subseteq V(F).$$  Hence $V(F) = co_F(\pr(S))$, i.e. $\pr(S)$ is a finite spanning set of $F$.  It follows that $F \in \mathcal{C}$.  On the other hand, because $S$ is a finite spanning set of $H$, and because $K$ is a half-space, it follows that $K$ and $V(H)-K$ have non-empty intersections with $S$.  Therefore $d(F) \leq d(\pr(S)) < d(S) = d(H) = n+1$.  Hence $d(F) \leq n$, and thus, by the induction hypothesis, $F$ is finite.  It follows that $H$, which is an expansion of $F$, is also finite.

$H$ is then a finite connected bipartite graph which is Att-convex.  
We then deduce, by Case 1 of this proof, that $H$ is a partial cube, which completes the proof of the claim.\\

Now, let $G$ be an infinite Att-convex bipartite graph.  By Lemma~\ref{L:attach.pts/conv}, each polytope of $G$ is Att-convex, and thus is a partial cube by the above claim.  Consequently $G$ is itself a partial cube by Lemma~\ref{L:inf.p.c.}.
\end{proof}

\begin{pro}\label{P:charact.p.c.}
Let $G$ be a connected bipartite graph.  The following assertions are equivalent:

\textnormal{(i)}\; $G$ is a partial cube.

\textnormal{(ii)}\; $G$ is Att-convex.

\textnormal{(iii)}\; For every convex subgraph $F$ of $G$, any maximal proper convex subset of $V(F)$ is a half-space of $F$.
\end{pro}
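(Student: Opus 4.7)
The plan is to leverage Theorem~\ref{T:bip.gr./Att-convex}, which already establishes (i)~$\Leftrightarrow$~(ii), so only the equivalence of (iii) with these conditions remains. I will prove (ii)~$\Rightarrow$~(iii) and (iii)~$\Rightarrow$~(ii) separately. Both directions rest on a simple observation: if $K$ is a maximal proper convex subset of $V(F)$, then for every $x \in V(F) - K$ the set $K$ is convex and maximal with respect to missing $x$, hence a copoint of $F$ at $x$; consequently the set of attaching points of $K$ in $F$ equals $V(F) - K$.

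For (ii)~$\Rightarrow$~(iii), fix a convex subgraph $F$ of $G$. By Lemma~\ref{L:attach.pts/conv}, $F$ inherits Att-convexity from $G$. For any maximal proper convex subset $K$ of $V(F)$, the observation above shows that the attaching-point set of $K$ in $F$ equals $V(F) - K$, and this set is convex by Att-convexity of $F$; hence $K$ is a half-space of $F$.

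For (iii)~$\Rightarrow$~(ii), let $K$ be a copoint of $G$ with an attaching point $x$, and set $F := G[K \cup \mathrm{Att}(K)]$. Since $K \cup \mathrm{Att}(K) = co_G(K \cup \{x\})$, $F$ is a convex subgraph of $G$. I claim $K$ is a maximal proper convex subset of $V(F)$. Indeed, any convex subset of $V(F)$ properly containing $K$ must contain some $y \in \mathrm{Att}(K)$; because $F$ is convex in $G$, the convex hull in $F$ of $K \cup \{y\}$ coincides with $co_G(K \cup \{y\}) = K \cup \mathrm{Att}(K) = V(F)$, so the only such convex set is $V(F)$ itself. By (iii), $K$ is therefore a half-space of $F$, meaning that $\mathrm{Att}(K) = V(F) - K$ is convex in $F$ and hence in $G$. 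So $G$ is Att-convex, and (i) then follows from Theorem~\ref{T:bip.gr./Att-convex}.

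The only delicate point is the maximality claim in the third paragraph; it goes through precisely because convex hulls taken inside the convex subgraph $F$ coincide with convex hulls taken in $G$ for subsets of $V(F)$. Beyond this bookkeeping, the proof reduces to invoking Theorem~\ref{T:bip.gr./Att-convex} and Lemma~\ref{L:attach.pts/conv} together with the elementary observation identifying maximal convex subsets as copoints at every outside vertex.
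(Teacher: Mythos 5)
Your proof is correct, and its (iii)~$\Rightarrow$~(ii) leg is exactly the paper's: form $F = G[K \cup \mathrm{Att}(K)]$, note $K$ is a maximal proper convex subset of $V(F)$, and apply (iii). The only real difference is the other leg: the paper closes the cycle by proving (i)~$\Rightarrow$~(iii), observing that in a partial cube a maximal proper convex subset $K$ of a convex subgraph $F$ must equal $W_{ab}$ for any boundary edge $ab$ with $a \in K$ (via Lemma~\ref{L:bip.gr./convex} and Theorem~\ref{T:Djokovic-Winkler}), so that $V(F)-K = W_{ba}$ is convex. You instead prove (ii)~$\Rightarrow$~(iii) purely convexity-theoretically: $F$ inherits Att-convexity by Lemma~\ref{L:attach.pts/conv}, a maximal proper convex subset of $V(F)$ is a copoint of $F$ at every outside vertex, hence $\mathrm{Att}_F(K) = V(F)-K$ is convex. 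Your route avoids the Djokovi\'c--Winkler machinery for this step at the cost of leaning on Lemma~\ref{L:attach.pts/conv}; the paper's version is more structural, exhibiting the half-space explicitly as a $W_{ab}$. Both are sound, and your supporting observations (copoints at all outside vertices; convex hulls and convexity relativize correctly to a convex subgraph) are exactly the bookkeeping needed.
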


\begin{proof}
(i) $\Rightarrow$ (iii):\; Let $F$ be a convex subgraph of a partial cube $G$.  Then $F$ itself is a partial cube.  Let $K$ be a maximal proper convex subset of $V(F)$.  Then $K = W_{ab}$ for some edge $ab \in \partial_F(K)$ with $a \in K$.  Hence $V(F)-K = W_{ba}$, which proves that $K$ is a half-space of $F$ by Theorem~\ref{T:Djokovic-Winkler}.

(iii) $\Rightarrow$ (ii):\; Assume that $G$ satisfies (iii), and let $K$ be a copoint of $G$.  Then $X := K \cup \mathrm{Att}(K)$ is convex, and $K$ is a maximal proper convex subset of $X$.  Hence $K$ is a half-space of $G[X]$ by (iii).  Therefore $\mathrm{Att}(K)$ is convex in $G[X]$, and thus in $G$.

(ii) $\Leftrightarrow$ (i) is Theorem~\ref{T:bip.gr./Att-convex}.
\end{proof}

\section{Partial cubes with pre-hull number at most $1$}\label{S:p.c.ph.at most1}

We begin by recalling some definitions and results from~\cite{PS09}.  In that paper we introduced and studied the concept of pre-hull
number of a convexity.  We recall its definition in the particular
case of the geodesic convexity of a graph.

\begin{defn}\label{D:ph(G)}
Let $G$ be a graph.  The least non-negative integer $n$ (if it exists) such that
$co_{G}(C \cup \{x \}) = \mathcal{I}_{G}^{n}(C \cup \{x \})$ for each
vertex $x$ of $G$ and each copoint $C$ at $x$, is called the \emph{pre-hull number} of a graph $G$ and is denoted by
$ph(G)$.  If there is no such
$n$ we put $ph(G) := \infty$.
\end{defn}

\begin{pro}\label{P:ph(bipart.gr.)=0/trees}\textnormal{(Polat and Sabidussi~\cite[Corollary 3.8]{PS09})}
The pre-hull number of a connected bipartite graph $G$ is zero if and
only if $G$ is a tree.
\end{pro}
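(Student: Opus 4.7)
The plan is to establish the two directions separately, with Theorem~\ref{T:bip.gr./Att-convex} doing the heavy lifting in the forward implication.

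For the \emph{sufficiency}, assume $G$ is a tree. The convex sets of a tree are precisely its subtrees, so any copoint $K$ at a vertex $x$ is a maximal subtree of $G$ avoiding $x$, and must therefore be a connected component of $G - x$. Then $K \cup \{x\}$ is itself a subtree, hence convex, giving $co_G(K \cup \{x\}) = K \cup \{x\}$; this is the $n=0$ case of the definition, so $ph(G) = 0$.

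For the \emph{necessity}, suppose $ph(G) = 0$. Then for every copoint $K$ at $x$ we have $\mathrm{Att}(K) = co_G(K \cup \{x\}) - K = \{x\}$, which is trivially convex. So $G$ is Att-convex, and Theorem~\ref{T:bip.gr./Att-convex} yields that $G$ is a partial cube. Suppose toward a contradiction that $G$ is not a tree. Then some $\Theta$-class has at least two edges, for otherwise every edge $ab$ would be the unique edge joining $W_{ab}$ to $W_{ba}$ and hence a bridge, forcing $G$ to be a tree. Pick two distinct edges $ab$ and $cd$ in a common $\Theta$-class with $a, c \in W_{ab}$ and $b, d \in W_{ba}$, so $b \neq d$.

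The core step is to show that every $x \in U_{ba}$ is an attaching point of the copoint $W_{ab}$; applied to $x = b$ and $x = d$, this gives $\{b, d\} \subseteq \mathrm{Att}(W_{ab})$, contradicting $|\mathrm{Att}(W_{ab})| = 1$. Fix such an $x$ with neighbor $y \in W_{ab}$, so that $xy$ belongs to the $\Theta$-class of $ab$. Using the isometric embedding of $G$ into a hypercube, the $\Theta$-class of $ab$ flips a single coordinate; $y$ and $x$ differ exactly in this coordinate, while $z$ and $x$ agree on it for every $z \in W_{ba}$. Hence $d_G(y, z) = d_G(x, z) + 1$, so prepending the edge $yx$ to any $(x, z)$-geodesic produces a $(y, z)$-geodesic, and thus $x \in I_G(y, z)$. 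Now, any convex set $K'$ strictly containing $W_{ab}$ must meet $W_{ba}$ at some $z$; combined with $y \in W_{ab} \subseteq K'$, convexity gives $I_G(y, z) \subseteq K'$ and hence $x \in K'$. So $W_{ab}$ is maximal convex avoiding $x$, i.e., $x \in \mathrm{Att}(W_{ab})$.

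The main obstacle is proving the geodesic claim $x \in I_G(y, z)$. The hypercube embedding argument above is the shortest route, but an alternative is to use Lemma~\ref{L:gen.propert.}(iii) together with transitivity of $\Theta$ in partial cubes (Theorem~\ref{T:Djokovic-Winkler}(ii)) to analyse how any $(y, z)$-geodesic crosses the $\Theta$-class of $ab$ and thereby extract a $(y, z)$-geodesic passing through $x$.
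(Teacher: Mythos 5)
Your proof is correct, but note that the paper itself offers no proof of this proposition: it is simply quoted from Polat--Sabidussi \cite[Corollary 3.8]{PS09}, so there is no internal argument to compare against. Your sufficiency direction (copoints of a tree are the components of $G-x$, and adding $x$ back gives a subtree, hence a convex set) is the standard elementary argument. Your necessity direction is sound but heavier than the original: you first observe that $ph(G)=0$ forces every copoint to have a single attaching point, hence $G$ is Att-convex, and then invoke the main Theorem~\ref{T:bip.gr./Att-convex} of this paper to get that $G$ is a partial cube before deriving a contradiction from a $\Theta$-class with two edges. This is legitimate and non-circular here (Proposition~\ref{P:ph(bipart.gr.)=0/trees} is nowhere used in the proof of Theorem~\ref{T:bip.gr./Att-convex}), but it buys a 2009 statement with the paper's main result; the reference proves it directly from the definition of the pre-hull operator. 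Two small assertions you leave implicit are true and worth a line each: distinct $\Theta$-equivalent edges $ab$, $cd$ of a bipartite graph share no endvertex (if $b=d$ then $d_G(a,c)=d_G(b,d)=0$), which justifies $b\neq d$; and the claim $d_G(y,z)=d_G(x,z)+1$ for $x\in U_{ba}$, $y$ its neighbour in $U_{ab}$ and $z\in W_{ba}$ can be obtained without the hypercube embedding, exactly along the alternative you sketch: a $(y,z)$-geodesic crosses the cut $\partial_G(W_{ab})$ in exactly one edge $uv$ (Lemma~\ref{L:gen.propert.}(iv) plus the fact that all cut edges are $\Theta$-equivalent), and the bipartite form of $\Theta$ then gives $d_G(y,u)=d_G(x,v)$, whence $d_G(y,z)=d_G(x,v)+1+d_G(v,z)\geq d_G(x,z)+1\geq d_G(y,z)$.
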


\begin{defn}\label{D:ph-stable}\textnormal{(Polat and Sabidussi~\cite[Definition 7.1]{PS09})}
Call a set $A$ of vertices of a graph $G$ \emph{ph-stable} if any two
vertices $u,v \in \mathcal{I}_{G}(A)$ lie on a geodesic joining two
vertices in $A$.
\end{defn}

The condition of Definition~\ref{D:ph-stable}, which is
symmetric in $u$ and $v$, can be replaced by the formally
\textquotedblleft one-sided\textquotedblright condition: \emph{for any
two vertices $u,v \in \mathcal{I}_{G}(A)$ there is a $w \in A$
such that $v \subseteq I_{G}(u,w)$}.

\begin{pro}\label{P:bip.gr./ph=1}\textnormal{(Polat and Sabidussi~\cite[Theorem 7.4]{PS09})}
Let $G$ be a bipartite graph.  Then $ph(G) \leq 1$ if and only if, for
every copoint $K$ of $G$, the set $\mathrm{Att}(K)$ is convex and
$N_G(K) \cap \mathrm{Att}(K)$ is ph-stable.
\end{pro}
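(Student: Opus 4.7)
Plan. Since $K$ is convex, $\mathcal{I}_G(K \cup \{x\}) = (K \cup \{x\}) \cup \bigcup_{k \in K} I_G(x, k)$ and $co_G(K \cup \{x\}) = K \cup \mathrm{Att}(K)$; hence $ph(G) \leq 1$ is equivalent to the geometric condition that, for every copoint $K$ at $x$, every attaching point $y \neq x$ of $K$ lies on some $(x, k)$-geodesic with $k \in K$. My plan is to prove both directions using this reformulation.

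For the forward direction ($\Rightarrow$), assume $ph(G) \leq 1$. To prove Att-convexity, I would take $u, v \in \mathrm{Att}(K)$; since $K$ is also a copoint at $u$, the reformulation gives some $k \in K$ with $v \in I_G(u, k)$. For any $w \in I_G(u, v) \cap K$, the identities $d(u, k) = d(u, v) + d(v, k) = d(u, w) + d(w, v) + d(v, k)$ combined with the triangle inequality $d(u, k) \leq d(u, w) + d(w, k)$ force $v \in I_G(w, k) \subseteq K$ by convexity of $K$, contradicting $v \in \mathrm{Att}(K)$; hence $I_G(u, v) \subseteq \mathrm{Att}(K)$. For ph-stability of $A := N_G(K) \cap \mathrm{Att}(K)$, Att-convexity gives $\mathcal{I}_G(A) \subseteq \mathrm{Att}(K)$, and for distinct $u, v \in \mathcal{I}_G(A)$ the reformulation at $u$ yields $k \in K$ with $v \in I_G(u, k)$; the last $\mathrm{Att}(K)$-vertex on a $(u, k)$-geodesic through $v$ is the desired $w \in A$ with $v \in I_G(u, w)$.

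For the reverse direction ($\Leftarrow$), the essential preliminary observation---using only Att-convexity---is that for every $w \in A$ with $K$-neighbor $k$ and every $z \in \mathrm{Att}(K)$, $w \in I_G(z, k)$: indeed, $d(z, k) < d(z, w)$ would place $k \in I_G(z, w) \subseteq \mathrm{Att}(K)$ by convexity, contradicting $k \in K$. Given a copoint $K$ at $x$ and $y \in \mathrm{Att}(K) \setminus \{x\}$, it therefore suffices to produce $w \in A$ with $y \in I_G(x, w)$: applying the observation at $z = x$ and $z = y$ respectively gives $d(x, k) = d(x, w) + 1$ and $d(y, k) = d(y, w) + 1$, which combined with $d(x, w) = d(x, y) + d(y, w)$ yields $d(x, k) = d(x, y) + d(y, k)$, i.e.\ $y \in I_G(x, k)$. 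To produce such a $w$, I would invoke ph-stability of $A$ with $u = x, v = y$, after first verifying $x, y \in \mathcal{I}_G(A)$ by exhibiting each as lying on a geodesic joining two $A$-vertices obtained from shortest paths to $K$ (using convexity of $\mathrm{Att}(K)$ to reconcile different such geodesics).

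The main obstacle is the last step of the reverse direction: verifying $x, y \in \mathcal{I}_G(A)$ so that ph-stability actually applies. This requires pairing a ``near-side'' $A$-vertex on a shortest $(z, K)$-path with a ``far-side'' $A$-vertex that places $z$ on a common geodesic, a step that may need a case analysis; degenerate situations (e.g.\ $|A| = 1$, where $A$ acts as a cut vertex separating $\mathrm{Att}(K)$ from $K$) have to be treated separately but lead to near-vacuous conclusions.
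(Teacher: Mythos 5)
The paper gives no proof of this proposition (it is quoted verbatim from \cite[Theorem 7.4]{PS09}), so your attempt has to be judged on its own terms. Your forward direction is correct: the reformulation of $ph(G)\leq 1$, the argument that $I_G(u,v)\cap K=\emptyset$ for $u,v\in\mathrm{Att}(K)$, and the ``last $\mathrm{Att}(K)$-vertex on a $(u,k)$-geodesic'' device all work, because the geodesic lies in the convex set $K\cup\mathrm{Att}(K)$ and its initial segment up to $v$ lies in $\mathrm{Att}(K)$. The reverse direction, however, has a genuine gap exactly where you flag it: you never prove that $x,y\in\mathcal{I}_G(A)$ for $A:=N_G(K)\cap\mathrm{Att}(K)$, and without that, ph-stability of $A$ gives you nothing, so the heart of the implication is missing. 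Your sketched route --- pairing a ``near-side'' $A$-vertex taken from a shortest $(z,K)$-path with a ``far-side'' $A$-vertex --- is not convincing as stated: the near-side vertex is easy, but nothing you say produces an $a'\in A$ with $d_G(w,a')=d_G(w,z)+d_G(z,a')$ when $z\in\mathrm{Att}(K)$ is far from $K$ (say at maximum distance); the existence of such a far-side vertex is essentially what $ph(G)\leq 1$ asserts at that copoint, so as written you are deferring the actual content of the theorem to ``a case analysis''. (Also, your preliminary observation tacitly uses bipartiteness: parity is needed to exclude $d(z,k)=d(z,w)$; this should be said, since it is the only place the bipartite hypothesis enters.)

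The gap is closable, but by a different argument from the one you gesture at, namely by proving $\mathrm{Att}(K)\subseteq\mathcal{I}_G(A)$ as follows. First, ph-stability of $A$ alone implies that $\mathcal{I}_G(A)$ is convex: if $t\in I_G(u,v)$ and $u,v$ lie on a common geodesic joining $a_1,a_2\in A$, the triangle inequality forces $t\in I_G(a_1,a_2)$. Second, $K\cup\mathcal{I}_G(A)$ is convex: a geodesic from $u\in K$ to $v\in\mathcal{I}_G(A)$ lies in the convex set $K\cup\mathrm{Att}(K)$, and since $K$ is convex its $K$-vertices form an initial segment, so the first vertex outside $K$ is some $q\in A$ and every later vertex lies in $I_G(q,v)\subseteq\mathcal{I}_G(A)$ by the first step. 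Finally, $A\neq\emptyset$, so $K\cup\mathcal{I}_G(A)$ is a convex set properly containing $K$; since $K$ is a copoint at every $z\in\mathrm{Att}(K)$, maximality forces $z\in K\cup\mathcal{I}_G(A)$, hence $\mathrm{Att}(K)\subseteq\mathcal{I}_G(A)$. With that lemma in place, your ph-stability step and your distance computation via the observation do finish the proof; without it, the proposal establishes only one direction.
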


The following result follows immediately from the above proposition.

\begin{pro}\label{P:charact.p.c.ph}\textnormal{(Polat and Sabidussi~\cite[Theorem 7.5]{PS09})}
Let $G$ be a partial cube.  Then $ph(G) \leq 1$ if and only if
$U_{ab}$ and $U_{ba}$ are ph-stable for every edge $ab$ of $G$.
\end{pro}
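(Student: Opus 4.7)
The plan is to derive Proposition~\ref{P:charact.p.c.ph} directly from Proposition~\ref{P:bip.gr./ph=1} by making both conditions in that proposition explicit in the partial-cube setting. The three ingredients I need are: a description of the copoints of a partial cube, the automatic convexity of $\mathrm{Att}(K)$ for such copoints, and a computation showing $N_G(K) \cap \mathrm{Att}(K) = U_{ba}$ when $K = W_{ab}$.

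First I would recall, as noted after Theorem~\ref{T:Djokovic-Winkler}, that the copoints of a partial cube $G$ are precisely its half-spaces, and by Theorem~\ref{T:Djokovic-Winkler}(i) these are exactly the sets $W_{ab}$ with $ab \in E(G)$. Since $G$ is a partial cube, it is Att-convex by Theorem~\ref{T:bip.gr./Att-convex}, so $\mathrm{Att}(W_{ab})$ is convex automatically. This dispatches the first half of the hypothesis of Proposition~\ref{P:bip.gr./ph=1}, and reduces the characterization of $ph(G)\le 1$ to the ph-stability of $N_G(W_{ab}) \cap \mathrm{Att}(W_{ab})$ for each edge $ab \in E(G)$.

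Next, I would identify this set with $U_{ba}$. Since $G$ is bipartite and connected, $V(G) = W_{ab} \cup W_{ba}$, so $N_G(W_{ab}) \subseteq W_{ba}$ and $N_G(W_{ab}) = U_{ba}$ directly from the definition of $U_{ba}$. On the other hand, the result of~\cite[Theorem 6.7]{PS09} invoked in the proof of Theorem~\ref{T:bip.gr./Att-convex} asserts that $N_G(K) \subseteq \mathrm{Att}(K)$ for every copoint $K$ of a partial cube. Applied to $K = W_{ab}$ this gives $U_{ba} = N_G(W_{ab}) \subseteq \mathrm{Att}(W_{ab})$, whence $N_G(W_{ab}) \cap \mathrm{Att}(W_{ab}) = U_{ba}$.

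Finally, I would combine the two previous steps: Proposition~\ref{P:bip.gr./ph=1} tells me that $ph(G) \le 1$ iff, for every edge $ab$, the set $N_G(W_{ab}) \cap \mathrm{Att}(W_{ab}) = U_{ba}$ is ph-stable. As $ab$ ranges over all ordered edges of $G$, both $U_{ab}$ and $U_{ba}$ appear, giving the stated condition. There is no genuine obstacle here; the whole argument is a translation of the general bipartite criterion of Proposition~\ref{P:bip.gr./ph=1} through the extra structure (half-space copoints, $N_G(K) \subseteq \mathrm{Att}(K)$) that partial cubes enjoy.
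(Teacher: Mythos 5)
Your argument is correct and follows exactly the route the paper intends: the paper gives no separate proof, stating only that the result ``follows immediately'' from Proposition~\ref{P:bip.gr./ph=1} (citing \cite{PS09}), and your derivation simply makes that explicit via the identifications copoints $=$ half-spaces $= W_{ab}$, the automatic convexity of $\mathrm{Att}(W_{ab})$, and $N_G(W_{ab}) \cap \mathrm{Att}(W_{ab}) = U_{ba}$. All three steps are justified correctly (in particular there is no circularity in invoking Theorem~\ref{T:bip.gr./Att-convex}, whose necessity direction comes from \cite[Theorem 6.7]{PS09}), so nothing is missing.
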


From Theorem~\ref{T:bip.gr./Att-convex} and Proposition~\ref{P:bip.gr./ph=1} we infer the second main result of this paper.

\begin{thm}\label{T:bip.gr.ph1=>p.c.}
Any connected bipartite graph $G$ such that $ph(G) \leq 1$ is a 
partial cube.
\end{thm}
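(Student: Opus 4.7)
The plan is to combine the two main ingredients prepared in the paper, namely the characterization of partial cubes via Att-convexity (Theorem~\ref{T:bip.gr./Att-convex}) and the characterization of bipartite graphs with pre-hull number at most $1$ due to Polat and Sabidussi (Proposition~\ref{P:bip.gr./ph=1}). The theorem is essentially an immediate corollary of these two results, so no new technical work is required; the proof is a short chain of implications.

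Concretely, I would proceed as follows. Let $G$ be a connected bipartite graph with $ph(G)\leq 1$, and fix an arbitrary copoint $K$ of $G$. Invoke Proposition~\ref{P:bip.gr./ph=1}: from $ph(G)\leq 1$ it follows in particular that $\mathrm{Att}(K)$ is convex. Since $K$ was arbitrary, this says exactly that $G$ is Att-convex in the sense of Definition~\ref{D:Att-convex}. Now apply Theorem~\ref{T:bip.gr./Att-convex} to conclude that $G$ is a partial cube.

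There is no real obstacle: the entire content of the theorem has been pushed into the two lemmas quoted above. One small remark to include is that we only need the easy half of Proposition~\ref{P:bip.gr./ph=1}, namely the necessity of Att-convexity for $ph(G)\leq 1$; the second clause concerning ph-stability of $N_G(K)\cap\mathrm{Att}(K)$ is not used here. Thus the statement can be recorded as a one-line consequence of the preceding results.
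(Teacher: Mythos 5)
Your proposal is correct and is exactly the paper's argument: the theorem is deduced from Proposition~\ref{P:bip.gr./ph=1} (which gives Att-convexity from $ph(G)\leq 1$) together with Theorem~\ref{T:bip.gr./Att-convex}. Your remark that only the Att-convexity clause of Proposition~\ref{P:bip.gr./ph=1} is needed is accurate.
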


Note that a bipartite graph whose pre-hull number is greater than $1$ may or may not be a partial cube.  For example, $2$ is the pre-hull number of both $K_{2,3}$, which is the smallest connected bipartite graph which is not a partial cube, and of the partial cube $Q_3^-$ (i.e. the $3$-cube $Q_3$ minus a vertex).  A lot of well-known partial cubes have a pre-hull number equal to $1$: median graphs, benzenoid graphs, cellular bipartite graphs and more generally netlike partial cubes.

We will now study some properties of partial cubes whose
pre-hull number is at most~$1$, with in particular the closure of the class of these graphs  under usual operations of partial cubes.

\begin{pro}\label{P:fin.conv./ph1}
Let $G$ be a partial cube such that any finite subgraph of $G$ 
is contained in a finite convex subgraph of $G$ whose pre-hull number 
is at most $1$.  Then $ph(G) \leq 1$.
\end{pro}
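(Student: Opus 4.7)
The plan is to use Proposition~\ref{P:charact.p.c.ph} in both directions: to reduce the claim $ph(G) \leq 1$ to showing that $U_{ab}^{G}$ and $U_{ba}^{G}$ are ph-stable for every edge $ab \in E(G)$, and to exploit the hypothesis $ph(F) \leq 1$ on finite convex subgraphs $F$ by asserting ph-stability of $U_{ab}^{F}$ inside each such $F$. The bridge between these two local/global versions of ph-stability is Lemma~\ref{L:prop.isom subgr.}, which gives $U_{ab}^{F} = U_{ab}^{G} \cap V(F)$ whenever $F$ is convex in $G$ and $ab \in E(F)$, together with the isometry $I_{F}(x,y) = I_{G}(x,y)$ for $x,y \in V(F)$.

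Concretely, fix an edge $ab \in E(G)$ and consider arbitrary vertices $u, v \in \mathcal{I}_{G}(U_{ab}^{G})$. Choose witnesses $p_{1},p_{2},q_{1},q_{2} \in U_{ab}^{G}$ with $u \in I_{G}(p_{1},p_{2})$ and $v \in I_{G}(q_{1},q_{2})$, and by hypothesis embed the finite set $\{a,b,u,v,p_{1},p_{2},q_{1},q_{2}\}$ into a finite convex subgraph $F$ of $G$ with $ph(F) \leq 1$. Since $F$ is convex and contains $a$ and $b$, we have $ab \in E(F)$, and by Lemma~\ref{L:prop.isom subgr.} each $p_{i}, q_{j}$ belongs to $U_{ab}^{F}$. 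Because $F$ is isometric in $G$, the same intervals witness that $u,v \in \mathcal{I}_{F}(U_{ab}^{F})$. Now $ph(F) \leq 1$ together with Proposition~\ref{P:charact.p.c.ph} gives that $U_{ab}^{F}$ is ph-stable in $F$; hence there is $w \in U_{ab}^{F}$ with $v \in I_{F}(u,w)$. Using once more that $U_{ab}^{F} \subseteq U_{ab}^{G}$ and $I_{F}(u,w) = I_{G}(u,w)$, we obtain a witness $w \in U_{ab}^{G}$ with $v \in I_{G}(u,w)$, so $U_{ab}^{G}$ is ph-stable. The same argument (or by symmetry, interchanging $a$ and $b$) applies to $U_{ba}^{G}$, and Proposition~\ref{P:charact.p.c.ph} then yields $ph(G) \leq 1$.

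There is no real obstacle here; the proof is a routine localization-to-finite-convex-subgraph argument, and the only point that requires care is invoking Lemma~\ref{L:prop.isom subgr.} in its \emph{convex} form so that $U_{ab}^{F}$ is precisely $U_{ab}^{G} \cap V(F)$ (not merely contained in it), which is what allows the ph-stability witness found inside $F$ to serve as a witness in $G$.
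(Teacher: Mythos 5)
Your proof is correct and follows essentially the same route as the paper: localize $u,v$ and witnesses from $U_{ab}^{G}$ into a finite convex subgraph $F$ with $ph(F)\leq 1$ supplied by the hypothesis, apply Proposition~\ref{P:charact.p.c.ph} inside $F$, and transfer the witness back to $G$ via the convex case of Lemma~\ref{L:prop.isom subgr.} and the equality of intervals. The only cosmetic differences are that the paper puts the edge $ab$ and the two witnessing geodesics themselves into the finite subgraph (rather than just the endpoints) and uses the two-sided form of ph-stability, neither of which changes the argument.
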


\begin{proof}
Let $ab \in E(G)$ and $u, v \in \mathcal{I}_{G}(U_{ab})$.  Let $P_u$ and $P_v$ be geodesics joining vertices in $U_{ab}$ on which lie $u$ and $v$, respectively.  Then $\langle a,b\rangle \cup P_u \cup P_v$ is contained in a finite convex subgraph $F$ of $G$ such that $ph(G) \leq 1$.  The set $U_{ab}^F$ is ph-stable since $ph(F) \leq 1$, and thus $u, v$ lie on an $(x,y)$-geodesic $R$ for some $x, y \in U_{ab}^F$.  Because $F$ is convex in $G$, it follows that $R$ is a geodesic in $G$, and also that $x, y \in U_{ab}$ since $U_{ab}^{F} = U_{ab} \cap V(F)$ by Lemma~\ref{L:prop.isom subgr.}.  Therefore $U_{ab}$, and analogously $U_{ba}$, are ph-stable.  Hence $ph(G) \leq 1$ by Proposition~\ref{P:charact.p.c.ph}.
\end{proof}

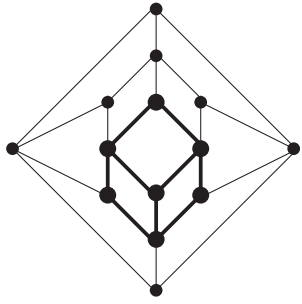
\begin{figure}[!h]
    \centering
   {\tt    \setlength{\unitlength}{0.80pt}
\begin{picture}(233,180)
\thinlines    

              \put(140,66){\line(2,1){43}}
              \put(140,111){\line(2,-1){43}}
              \put(97,66){\line(-2,1){45}}
              \put(96,111){\line(-2,-1){45}}
              \put(51,89){\line(1,-1){68}}
              \put(184,90){\line(-1,1){65}}
              \put(51,89){\line(1,1){66}}
              \put(184,87){\line(-1,-1){65}}
              \put(119,46){\line(0,-1){23}}
              \put(96,88){\line(0,1){26}}
              \put(140,90){\line(0,1){24}}
              \put(119,110){\line(0,1){26}}
              \put(119,134){\line(-1,-1){24}}
              \put(119,134){\line(1,-1){22}}
              \put(119,136){\line(0,1){20}}
              \put(119,133){\circle*{6}}
              \put(184,89){\circle*{6}}
              \put(51,89){\circle*{6}}
              \put(119,155){\circle*{6}}
              \put(119,22){\circle*{6}}
              \put(140,111){\circle*{6}}
              \put(96,111){\circle*{6}}
              \put(140,89){\circle*{8}}
              \put(96,89){\circle*{8}}
              \put(119,111){\circle*{8}}
              \put(119,68){\circle*{8}}
              \put(140,67){\circle*{8}}
              \put(96,67){\circle*{8}}
              \put(119,46){\circle*{8}}
 \linethickness{0,5mm}
              \put(140,90){\line(0,-1){23}}
              \put(96,89){\line(0,-1){22}}
              \put(119,44){\line(0,1){26}}
              \put(120,113){\line(-1,-1){24}}
              \put(119,112){\line(1,-1){22}}
              \put(97,66){\line(1,-1){22}}
              \put(141,68){\line(-1,-1){24}}
              \put(141,90){\line(-1,-1){24}}
              \put(97,88){\line(1,-1){22}}
\end{picture}}    
\caption{$M_{4,1}$ with a copy of $Q_3^-$ as a convex subgraph.}
\label{F:conv.subgr.}
\end{figure}

As was shown in \cite[Remark 8.1]{PS09}, the class of all partial cubes whose
pre-hull number is at most~$1$ is not closed under convex subgraphs.  The graph $M_{n,1}$,\; $n \geq 4$, i.e. the cube $Q_n$ from which a pair of antipodal vertices has been removed, has a pre-hull number equal to $1$.  On the other hand $M_{n,1}$ contains copies of $Q_{n-1}^{-}$ (the cube $Q_{n-1}$ with only one vertex deleted) as convex subgraphs (see Figure~\ref{F:conv.subgr.} for $n = 4$, where $Q_3^-$ is depicted by the big points and the thick lines), and $ph(Q_{n-1}) = 2$ by \cite[Theorem 5.8]{PS09}.   It was also shown in \cite[Remark 3.3]{P05-2} that $Q_3^-$ is a retract of $M_{4,1}$, which proves that the class of all partial cubes whose
pre-hull number is at most~$1$ is not closed under retracts.  However, we will see that it is closed under  gated subgraphs.

A set $A$ of vertices of a graph $G$ is said to be \emph{gated} if,
for each $x \in V(G)$, there exists a vertex $y$ (the \emph{gate} of
$x$) in $A$ such that $y \in I_{G}(x,z)$ for every $z \in A$.  Any
gated set is convex.  Moreover the set of gated sets of a graph with the addition of the empty set is a
convexity, and thus is closed under any intersections.   We will say that a
subgraph of a graph $G$ is \emph{gated} if its vertex set is gated.

\begin{lem}\label{L:gat.subgr./Uab}
Let $G$ be a partial cube, $F$ a gated subgraph of $G$, and $ab$ an edge of $F$.  Then the gate in $F$ of any $x \in U_{ab}^G$ belongs to $U_{ab}^F$.
\end{lem}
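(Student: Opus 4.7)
The plan is to invoke the gate of both $x$ and its neighbor across the $\Theta$-class $[ab]$, and show these two gates are adjacent in $F$.

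Let $y$ denote the gate of $x$ in $F$, and let $x'$ be the neighbor of $x$ in $W_{ba}^G$ (which exists by the hypothesis $x \in U_{ab}^G$). Let $y'$ denote the gate of $x'$ in $F$. Since $F$ is gated, it is convex, hence isometric in $G$, so Lemma~\ref{L:prop.isom subgr.} applies.

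First I would show that $y \in W_{ab}^F$ and $y' \in W_{ba}^F$. By the gate property applied to $a, b \in V(F)$, one has
\begin{equation*}
d_G(x,a) = d_G(x,y) + d_G(y,a), \qquad d_G(x,b) = d_G(x,y) + d_G(y,b).
\end{equation*}
Subtracting and using $x \in W_{ab}^G$ gives $d_G(y,a) < d_G(y,b)$, so $y \in W_{ab}^G \cap V(F) = W_{ab}^F$ by Lemma~\ref{L:prop.isom subgr.}. The symmetric argument applied to $x'$ yields $y' \in W_{ba}^F$.

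The main step, and the only nontrivial one, is to show that $yy'$ is an edge. Here I would combine two uses of the gate property with two triangle inequalities:
\begin{align*}
d_G(x,y')  &= d_G(x,y) + d_G(y,y') \quad \text{(gate of $x$ is $y$, $y' \in V(F)$)},\\
d_G(x',y)  &= d_G(x',y') + d_G(y',y) \quad \text{(gate of $x'$ is $y'$, $y \in V(F)$)},\\
d_G(x,y')  &\le d_G(x,x') + d_G(x',y') = 1 + d_G(x',y'),\\
d_G(x',y)  &\le d_G(x',x) + d_G(x,y) = 1 + d_G(x,y).
\end{align*}
Substituting the first two equations into the last two, and adding the resulting inequalities, the terms $d_G(x,y)$ and $d_G(x',y')$ cancel and one obtains $2\,d_G(y,y') \le 2$, i.e.\ $d_G(y,y') \le 1$. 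Since $y \in W_{ab}^G$ and $y' \in W_{ba}^G$ are in disjoint sets, $y \ne y'$, so $d_G(y,y') = 1$. Thus $yy'$ is an edge of $G$, and because $F$ is an induced subgraph (being convex), $yy' \in E(F)$. This exhibits a neighbor of $y$ in $W_{ba}^F$, so $y \in U_{ab}^F$, as required.

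The main obstacle is the adjacency step; the symmetric pairing of the two gate identities with the two triangle inequalities is the key computation, and it crucially depends on $x$ and $x'$ being adjacent rather than merely close. Everything else is a direct application of the isometric-subgraph identity of Lemma~\ref{L:prop.isom subgr.}.
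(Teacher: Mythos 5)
Your proof is correct and follows essentially the same route as the paper: take the gate $y$ of $x$ and the gate $y'$ of its neighbor $x'\in U_{ba}^G$, place them in $W_{ab}^F$ and $W_{ba}^F$ using the gate property together with Lemma~\ref{L:prop.isom subgr.}, and then show $y$ and $y'$ are adjacent. Your adjacency step (the two gate identities combined with the two triangle inequalities through the edge $xx'$, giving $2d_G(y,y')\le 2$) is just an explicit rendering of the paper's ``it easily follows'' argument from $y,g(x)\in I_G(x,g(y))$ and $x,g(y)\in I_G(y,g(x))$.
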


\begin{proof}
This is trivial if $x \in V(F)$.  Assume that $x \in V(G-F)$, and let $y$ be the neighbor of $x$ in $U_{ba}^G$.  Clearly, by Lemma~\ref{L:prop.isom subgr.},
\begin{gather*}
W_{ab}^F \subseteq W_{ab}^G \textnormal{\; and}\; W_{ba}^F \subseteq W_{ba}^G\\
U_{ab}^F \subseteq U_{ab}^G \textnormal{\; and}\; U_{ba}^F \subseteq U_{ba}^G
\end{gather*}
since $F$ is convex in $G$.

Denote by $g(x)$ and $g(y)$ the gates in $F$ of $x$ and $y$, respectively.  Then $g(x) \in I_G(x,a)$ and $g(y) \in I_G(y,b)$.  Hence $g(x) \in W_{ab}^F$ and $g(y) \in W_{ba}^F$.  On the other hand $y, g(x) \in I_G(x,g(y))$ and $x, g(y) \in I_G(y,g(x))$.  It easily follows that the vertices $g(x)$ and $g(y)$ are adjacent.  Therefore $g(x) \in U_{ab}^F$ and $g(y) \in U_{ba}^F$.
\end{proof}

\begin{thm}\label{T:gat.subg./ph}
Let $F$ be a gated subgraph of a partial cube $G$ such that $ph(G) \leq 1$.  Then $ph(F) \leq 1$.
\end{thm}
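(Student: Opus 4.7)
The plan is to reduce $ph(F)\le 1$ to the criterion of Proposition~\ref{P:charact.p.c.ph}. Since $F$ is gated, it is convex and hence isometric in $G$, so $F$ is itself a partial cube, and it suffices to show that $U_{ab}^{F}$ (and symmetrically $U_{ba}^{F}$) is ph-stable for each edge $ab$ of $F$.

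Fix such an edge $ab$ and two vertices $u,v\in\mathcal{I}_{F}(U_{ab}^{F})$. First I would use Lemma~\ref{L:prop.isom subgr.} to note that $U_{ab}^{F}=U_{ab}^{G}\cap V(F)\subseteq U_{ab}^{G}$, so that $u,v\in\mathcal{I}_{G}(U_{ab}^{G})$. The hypothesis $ph(G)\le 1$, via Proposition~\ref{P:charact.p.c.ph}, then yields vertices $x,y\in U_{ab}^{G}$ and an $(x,y)$-geodesic $P$ in $G$ passing through both $u$ and $v$. The issue is that $x,y$ need not lie in $F$, so I would replace them by their gates $g_{x},g_{y}$ in $F$; by Lemma~\ref{L:gat.subgr./Uab}, $g_{x},g_{y}\in U_{ab}^{F}$.

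The core of the argument is to promote $P$ to a geodesic of $F$ joining $g_{x}$ and $g_{y}$ that still contains $u$ and $v$. Assume without loss of generality that $u$ precedes $v$ on $P$. I would combine the gate equalities $d_{G}(x,u)=d_{G}(x,g_{x})+d_{G}(g_{x},u)$ and $d_{G}(y,v)=d_{G}(y,g_{y})+d_{G}(g_{y},v)$, valid because $u,v\in V(F)$, with the identity
\[
d_{G}(x,y)=d_{G}(x,g_{x})+d_{G}(g_{x},g_{y})+d_{G}(g_{y},y),
\]
to conclude, after substitution in $d_{G}(x,y)=d_{G}(x,u)+d_{G}(u,v)+d_{G}(v,y)$, that
\[
d_{G}(g_{x},g_{y})=d_{G}(g_{x},u)+d_{G}(u,v)+d_{G}(v,g_{y}).
\]
Convexity of $F$ guarantees that a $(g_{x},u)$-geodesic, the subpath $P[u,v]$ and a $(v,g_{y})$-geodesic all lie in $F$; their concatenation then has length $d_{G}(g_{x},g_{y})=d_{F}(g_{x},g_{y})$ and is therefore a $(g_{x},g_{y})$-geodesic of $F$ through $u$ and $v$. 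Thus $U_{ab}^{F}$ is ph-stable, and an identical argument handles $U_{ba}^{F}$; Proposition~\ref{P:charact.p.c.ph} then delivers $ph(F)\le 1$.

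The step I expect to be the only real (and still modest) obstacle is the displayed distance identity. I plan to derive it as follows: writing $d_{G}(x,y)=d_{G}(x,u)+d_{G}(u,y)$ from $u\in V(P)$, splitting off $g_{x}$ via the gate equation at $u$, applying the triangle inequality $d_{G}(g_{x},u)+d_{G}(u,y)\ge d_{G}(g_{x},y)$, and finally using the gate equation at $y$ with respect to $g_{x}\in V(F)$ to rewrite $d_{G}(g_{x},y)=d_{G}(g_{x},g_{y})+d_{G}(g_{y},y)$, which gives the inequality $\ge$; the reverse inequality is just a threefold triangle inequality. This identity is really the only place where the gated hypothesis on $F$ is used in an essential way.
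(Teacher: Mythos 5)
Your proposal is correct and follows essentially the same route as the paper: reduce to ph-stability of $U_{ab}^{F}$ via Proposition~\ref{P:charact.p.c.ph} and Lemma~\ref{L:prop.isom subgr.}, then use ph-stability of $U_{ab}^{G}$ and push the witnessing endpoints into $F$ by their gates, which land in $U_{ab}^{F}$ by Lemma~\ref{L:gat.subgr./Uab}. The only (cosmetic) difference is that the paper invokes the one-sided reformulation of ph-stability, so it replaces a single vertex $z$ by its gate and checks $y \in I_F(x,g(z))$ in one line, whereas you work with the symmetric definition, replace both endpoints, and verify the three-term distance identity — which your gate computations do establish correctly.
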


\begin{proof}
Let $ab$ be an edge of $F$.  By Lemma~\ref{L:prop.isom subgr.}, we have $U_{ab}^F \subseteq U_{ab}^G$ and $U_{ba}^F \subseteq U_{ba}^G$ since $F$ is convex in $G$.  We will show that $U_{ab}^{F}$ is ph-stable.

Let $x, y \in \mathcal{I}_{F}(U_{ab}^{F})$.  Because $\mathcal{I}_{F}(U_{ab}^{F}) \subseteq \mathcal{I}_{G}(U_{ab}^{G})$, and since $U_{ab}^G$ is ph-stable by Proposition~\ref{P:charact.p.c.ph}, it follows that $y \in I_G(x,z)$ for some $z \in U_{ab}^G$.  By Lemma~\ref{L:gat.subgr./Uab}, the gate $g(z)$ of $z$ in $F$ belongs to $U_{ab}^F$.  Moreover $y \in I_F(x,g(z))$ since $g(z) \in I_G(y,z)$.  Consequently $U_{ab}^F$ is ph-stable.  

In the same way we can prove that $U_{ba}^F$ is ph-stable.  We infer that $ph(F) \leq 1$ from Proposition~\ref{P:charact.p.c.ph}.
\end{proof}

We recall that a graph $G$ is the \emph{gated amalgam} of two graphs
$G_{0}$ and $G_{1}$ if $G_{0}$ and $G_{1}$ are isomorphic to two
intersecting gated subgraphs $G'_{0}$ and $G'_{1}$ of $G$ whose union
is $G$.  More precisely we also say that $G$ is the gated amalgam of
$G_{0}$ and $G_{1}$ \emph{along} $G'_{0} \cap G'_{1}$.  The gated amalgam of two partial cubes is clearly a partial cube.

\begin{thm}\label{T:gat.amalgam}
Let $G$ be the gated amalgam of two partial cubes $G_0$ and $G_1$.  Then $ph(G) \leq 1$ if and only if $ph(G_i) \leq 1$ for $i = 0, 1$.
\end{thm}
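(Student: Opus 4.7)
The forward direction is immediate: each $G_i$ is by definition a gated subgraph of $G$, so Theorem~\ref{T:gat.subg./ph} gives $ph(G_i)\leq 1$ whenever $ph(G)\leq 1$.

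For the converse, assume $ph(G_0),ph(G_1)\leq 1$. Since the gated amalgam of two partial cubes is a partial cube, $G$ is a partial cube, so by Proposition~\ref{P:charact.p.c.ph} the task reduces to showing that $U_{ab}^G$ is ph-stable for every edge $ab\in E(G)$. The plan is to fix such an edge and, by symmetry, assume $ab\in E(G_0)$; set $H:=G_0\cap G_1$ (itself gated) and let $\pi_i:V(G)\to V(G_i)$ denote the gate map onto $V(G_i)$ for $i=0,1$.

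The key preparatory step is to show that $\pi_0$ maps $U_{ab}^G$ into $U_{ab}^{G_0}$. For $u\in U_{ab}^G\cap V(G_0)$ this is Lemma~\ref{L:prop.isom subgr.}. For $u\in U_{ab}^G\cap (V(G_1)\setminus V(G_0))$ with $\Theta^G$-paired neighbor $w$, I would first rule out $w\in V(H)$ (which would force $\pi_0(u)=w$ and so $\pi_0(u)\in W_{ab}^{G_0}\cap W_{ba}^{G_0}=\emptyset$), showing $w\in V(G_1)\setminus V(G_0)$; then the standard fact that gates of adjacent vertices in a gated set are equal or adjacent (equality again excluded by the same contradiction) produces an edge $\pi_0(u)\pi_0(w)\in E(H)$ lying in the $\Theta^G$-class of $ab$, and hence $\pi_0(u)\in U_{ab}^{G_0}$. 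A dichotomy follows: either $U_{ab}^G\subseteq V(G_0)$, or the $\Theta^G$-class of $ab$ contains an edge of $E(H)$; in the latter case, replacing $ab$ by such an edge does not alter $U_{ab}^G$, so I may assume $ab\in E(H)$, whence $U_{ab}^{G_i}=U_{ab}^G\cap V(G_i)$ for both $i$ by Lemma~\ref{L:prop.isom subgr.}.

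In the first case, $\mathcal{I}_G(U_{ab}^G)\subseteq V(G_0)$ by convexity of $V(G_0)$ and $I_G$ agrees with $I_{G_0}$ there, so ph-stability transfers directly from $G_0$. In the second, given $x,y\in\mathcal{I}_G(U_{ab}^G)$ I would split on where $y$ lies. If $y\in V(G_0)$, a case analysis on the endpoints $v,v'\in U_{ab}^G$ of a defining $G$-geodesic through $y$ (three subcases based on which of $V(G_0)$, $V(G_1)\setminus V(G_0)$ they occupy), using the gate identity $d_G(p,q)=d_G(p,\pi_0(q))+d_G(\pi_0(q),q)$ valid for $p\in V(G_0)$, will show $\pi_0(x),y\in\mathcal{I}_{G_0}(U_{ab}^{G_0})$. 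Applying ph-stability of $U_{ab}^{G_0}$ then supplies $z\in U_{ab}^{G_0}\subseteq U_{ab}^G$ with $y\in I_{G_0}(\pi_0(x),z)$, and concatenating with $\pi_0(x)\in I_G(x,y)$ (gate property, since $y\in V(G_0)$) yields $y\in I_G(x,z)$. The case $y\in V(G_1)\setminus V(G_0)$ is entirely symmetric, using $\pi_1$ and ph-stability of $U_{ab}^{G_1}$. The hardest subcase is the one where both $v,v'\in V(G_1)\setminus V(G_0)$: the defining geodesic through $y$ then sits inside the convex set $V(G_1)$, forcing $y\in V(H)$, and one must recover a $G_0$-geodesic between the gates $\pi_0(v),\pi_0(v')$ that passes through $y$; this is settled by the triangle inequality combined with the gate identity applied to both $v$ and $v'$ in $G$.
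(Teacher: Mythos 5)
Your overall strategy is the same as the paper's: reduce via Proposition~\ref{P:charact.p.c.ph} to ph-stability of $U_{ab}^G$, use gates to either locate the whole class in one factor or move $ab$ into $H=G_0\cap G_1$, and then transfer ph-stability from the factors through the gate maps. The forward direction, the dichotomy, the subcases of your claim about $y$, and the concatenation step are all sound. But there is a genuine gap at the other half of your key claim, namely that $\pi_0(x)\in\mathcal{I}_{G_0}(U_{ab}^{G_0})$ for an arbitrary $x\in\mathcal{I}_G(U_{ab}^G)$. First, as written you derive it from the case analysis on the defining geodesic of $y$, which says nothing about $x$; granting that you meant a parallel analysis on a defining geodesic of $x$ through endpoints $s,t\in U_{ab}^G$, the subcase $x\in V(G_1)\setminus V(G_0)$ with \emph{both} $s,t\in V(G_1)\setminus V(G_0)$ is not settled by the tools you invoke. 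There you need ``$x\in I_G(s,t)$ implies $\pi_0(x)\in I_G(\pi_0(s),\pi_0(t))$'' with none of $x,s,t$ in $V(G_0)$, and this is \emph{not} a consequence of the triangle inequality plus the gate identity: those hold in any graph with a gated set, yet the implication fails, e.g., in $K_{2,3}$ with parts $\{h_1,x\}$, $\{h_2,s,t\}$ and gated edge $\{h_1,h_2\}$, where $x\in I(s,t)$ but $\pi(s)=\pi(t)=h_1$ while $\pi(x)=h_2$. (Your subcases for $y$ survive precisely because the vertex being located, or one endpoint, already lies in $V(G_0)$, so a gate identity can be anchored there.)

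The statement you need is true in partial cubes, but it requires genuinely more: either prove that the gate map onto a gated subgraph of a partial cube maps geodesics into geodesics via $\Theta$-classes (a class separating two vertices of $V(G_0)$ cannot separate a vertex from its gate, so it must separate $s$ from $x$ and $x$ from $t$ simultaneously, contradicting $x\in I_G(s,t)$), or sidestep as the paper does: in the troublesome subcase $s,t\in U_{ab}^{G_1}$, so $x\in\mathcal{I}_{G_1}(U_{ab}^{G_1})$; since $a\in U_{ab}^{G_1}$, ph-stability of $U_{ab}^{G_1}$ gives $w\in U_{ab}^{G_1}$ with $x\in I_{G_1}(a,w)$, i.e., you may re-anchor the defining geodesic of $x$ so that one endpoint lies in $V(H)$, after which your metric argument (gate identity at that endpoint, as in Lemma~\ref{L:gat.subgr./Uab}) does apply. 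This is exactly the role of the paper's reduction ``we can suppose $u\in I_G(x,y)$ with $x\in U_{ab}^{G_0}$, $y\in U_{ab}^{G_1}$'' before pushing $u$ across with $g_1$. With that repair your argument closes and is essentially the paper's proof; without it, the central transfer step is unjustified.
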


\begin{proof}
The necessity is clear by Theorem~\ref{T:gat.subg./ph} since $G_0$ and $G_1$ are isomorphic to two gated subgraphs of $G$.  Conversely, assume that $G = G_{0} \cup G_{1}$ where, for $i = 0, 1$,\; $G_{i}$ is a gated subgraph of $G$ such that $ph(G_i) \leq 1$.  The subgraph $G_{01} := G_{0} \cap G_{1}$ is also gated in $G$ as an intersection of gated subgraphs.  Let $ab$ be an edge of $G$.  We will show that $U_{ab}^G$ is ph-stable.  We distinguish two cases.

\emph{Case 1.}\; $U_{ab}^{G} = U_{ab}^{G_{i}}$ for some $i = 0$ or $1$.

Then $U_{ab}^G$ is ph-stable since so is $U_{ab}^{G_{i}}$ by Proposition~\ref{P:charact.p.c.ph}.

\emph{Case 2.}\; $U_{ab}^{G} \neq U_{ab}^{G_{i}}$ for $i = 0, 1$.

Then, for $i = 0, 1$, $G_{i}$ has an edge which is $\Theta$-equivalent
to $ab$.  Hence $G_{01}$, which is gated in $G$, also has an edge
$\Theta$-equivalent to $ab$.  Then, without loss of generality we can
suppose that $ab \in E(G_{01})$.  For any $x \in V(G)$ and $i = 0, 1$,
we denote by $g_{i}(x)$ the gate of $x$ in $G_{i}$.  Clearly 
\begin{gather}
\label{E:Wab}
W_{ab}^{G} =
W_{ab}^{G_{0}} \cup W_{ab}^{G_{1}} \textnormal{\quad and}\quad W_{ba}^{G} =
W_{ba}^{G_{0}} \cup W_{ba}^{G_{1}}\\
\label{E:Uab}
U_{ab}^{G} =
U_{ab}^{G_{0}} \cup U_{ab}^{G_{1}} \textnormal{\quad and}\quad U_{ba}^{G} = U_{ba}^{G_{0}} \cup U_{ba}^{G_{1}}\\
 \label{E:Iab}
 \mathcal{I}_{G_0}(U_{ab}^{G_0}) \cup \mathcal{I}_{G_1}(U_{ab}^{G_1}) \subseteq \mathcal{I}_{G}(U_{ab}^{G}).
\end{gather}

Let $u, v \in \mathcal{I}_{G}(U_{ab}^{G})$.  If $u, v \in \mathcal{I}_{G}(U_{ab}^{G_i})$ for some $i = 0$ or $1$, then $v \in I_{G_i}(u,w)$ for some $w \in U_{G_i}(ab)$.  Hence we are done because $v \in I_{G}(u,w)$ by (\ref{E:Iab}) and $w \in U_{G}(ab)$ by (\ref{E:Uab}).

Suppose that $u \in V(G_0)-V(G_1)$ and $v \in V(G_1)-V(G_0)$.  We first show that $u \in \mathcal{I}_{G_0}(U_{ab}^{G_0})$.  Because $u \in V(G_0)-V(G_1)$, we can suppose that $u \in I_G(x,y)$ for some $x \in U_{ab}^{G_{0}}$ and $y \in U_{ab}^{G_{1}}$.  Then $g_0(y) \in U_{ab}^{G_{0}}$ by Lemma~\ref{L:gat.subgr./Uab}, and thus $u \in I_{G_0}(x,g_0(y))$ since $g_0(y) \in I_{G_0}(u,y)$.  It follows that $g_1(u) \in I_{G_{01}}(g_1(x),g_0(y)) \subseteq \mathcal{I}_{G_1}(U_{ab}^{G_1})$.  Analogously $v \in \mathcal{I}_{G_1}(U_{ab}^{G_1})$.  Hence $v \in I_{G_1}(g_1(u),w)$ for some $w \in U_{ab}^{G_1}$ because $U_{ab}^{G_1}$ is ph-stable by Proposition~\ref{P:charact.p.c.ph}.  We infer that $v \in I_G(u,w)$, which proves that $U_{ab}^G$ is ph-stable.

In the same way we can prove that $U_{ba}^G$ is ph-stable.  Consequently $ph(G) \leq 1$ by Proposition~\ref{P:charact.p.c.ph}.
\end{proof}

We recall below three well-known properties of the
cartesian product that we will use in the proof of the next theorem.  The cartesian product of two partial cubes is clearly a partial cube.

\begin{pro}\label{P:pro.Cartes.prod.}
Let $G = G_{0} \Box G_{1}$ be the cartesian product of two connected 
graphs.  We have the following properties:

\textnormal{Distance Property:}\; $d_{G}(x,y) = d_{G_{0}}(pr_{0}(x),pr_{0}(y)) + 
d_{G_{1}}(pr_{1}(x),pr_{1}(y))$ for any $x, y \in V(G)$.

\textnormal{Interval Property:}\; $I_{G}(x,y) = I_{G_{0}}(pr_{0}(x),pr_{0}(y))
\times I_{G_{1}}(pr_{1}(x),pr_{1}(y))$ for any $x, y \in V(G)$.

\textnormal{Convex Subgraph Property:}\; A subgraph $F$ of $G$ is convex if and 
only if $F = pr_{0}(F) \Box pr_{1}(F)$, where both $pr_{0}(F)$ and 
$pr_{1}(F)$ are convex.
\end{pro}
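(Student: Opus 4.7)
The plan is to establish the three properties in the order stated, each building on the previous.

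I would first prove the Distance Property by a projection argument. Since every edge of $G = G_0 \Box G_1$ changes exactly one coordinate, any $(x,y)$-walk of length $\ell$ projects, coordinate by coordinate, to a walk from $pr_i(x)$ to $pr_i(y)$ in $G_i$ of some length $\ell_i$, with $\ell_0 + \ell_1 = \ell$. This immediately yields the lower bound $d_G(x,y) \geq d_{G_0}(pr_0(x),pr_0(y)) + d_{G_1}(pr_1(x),pr_1(y))$. For the matching upper bound, lift a $(pr_0(x), pr_0(y))$-geodesic of $G_0$ to $G$ by keeping the second coordinate fixed at $pr_1(x)$, and concatenate it with a $(pr_1(x), pr_1(y))$-geodesic lifted with first coordinate $pr_0(y)$; the resulting walk has exactly the required length.

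The Interval Property then follows from the defining equivalence $z \in I_G(x,y) \iff d_G(x,z) + d_G(z,y) = d_G(x,y)$. Expanding each of the three distances coordinate-wise via the Distance Property, the equality rearranges to $[d_{G_0}(pr_0(x), pr_0(z)) + d_{G_0}(pr_0(z), pr_0(y)) - d_{G_0}(pr_0(x), pr_0(y))] + [d_{G_1}(\cdots)] = 0$, and since each bracket is nonnegative by the triangle inequality, both must vanish. This is precisely the condition $pr_i(z) \in I_{G_i}(pr_i(x), pr_i(y))$ for $i = 0, 1$.

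For the Convex Subgraph Property, the \emph{if} direction is immediate from the Interval Property: if $F = F_0 \Box F_1$ with each $F_i$ convex, then $I_G(x,y) \subseteq F_0 \times F_1 = V(F)$ for all $x, y \in V(F)$. For the converse, assume $F$ is convex. To show $pr_0(F)$ is convex, take $u, v \in pr_0(F)$ with witnesses $x = (u, s),\, y = (v, t) \in V(F)$; for any $z \in I_{G_0}(u, v)$, the Distance Property gives $d_G(x, (z,s)) + d_G((z,s), y) = d_{G_0}(u,v) + d_{G_1}(s,t) = d_G(x,y)$, so $(z, s) \in I_G(x, y) \subseteq V(F)$, whence $z \in pr_0(F)$. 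Symmetrically $pr_1(F)$ is convex. Finally, the inclusion $V(F) \supseteq pr_0(F) \times pr_1(F)$ is obtained by a similar ``mix-and-match'' argument: given $(u, w)$ on the right with witnesses $x = (u, s),\, y = (v, w) \in V(F)$, the same distance computation places $(u, w)$ on an $(x, y)$-geodesic, hence in $V(F)$. The main obstacle is organizing this coordinate bookkeeping; once the Distance Property is in hand, every subsequent verification reduces to a one-line additive check.
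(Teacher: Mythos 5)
Your proposal is correct. Note, however, that the paper itself offers no proof of this proposition: it is explicitly recalled as a list of well-known properties of the cartesian product (standard facts found, e.g., in the Handbook of Product Graphs), so there is no argument of the author's to compare against. What you give is the standard textbook derivation, and all three steps check out: the edge-by-edge projection plus geodesic-lifting argument for the Distance Property, the coordinatewise expansion of $d_G(x,z)+d_G(z,y)=d_G(x,y)$ with the two nonnegative brackets forced to vanish for the Interval Property, and the mix-and-match geodesic argument for the Convex Subgraph Property. The only point worth flagging is cosmetic: your argument establishes the vertex-set identity $V(F)=pr_0(F)\times pr_1(F)$, and the stated graph equality $F=pr_0(F)\,\Box\,pr_1(F)$ then uses the (implicit) convention that convex subgraphs are induced; since the paper defines convexity of a subgraph via its vertex set and works with induced subgraphs throughout, this is consistent with its usage and not a gap in substance.
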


\begin{thm}\label{T:cart.prod.}
Let $G = G_0 \Box G_1$ be the cartesian product of two partial cubes $G_0$ and $G_1$.  Then $ph(G) \leq 1$ if and only if $ph(G_i) \leq 1$ for $i = 0, 1$.
\end{thm}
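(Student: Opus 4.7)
The plan is to prove both implications separately. For necessity, I observe that for any fixed $c_1 \in V(G_1)$ the set $V(G_0) \times \{c_1\}$ induces a gated subgraph of $G$ isomorphic to $G_0$ (the gate of $(x_0, x_1)$ is $(x_0, c_1)$); the analogous statement holds with the roles of the factors exchanged. Thus each $G_i$ is isomorphic to a gated subgraph of $G$, and $ph(G_i) \leq 1$ for $i = 0, 1$ follows immediately from Theorem~\ref{T:gat.subg./ph}.

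For the sufficiency, assume $ph(G_i) \leq 1$ for $i = 0, 1$. By Proposition~\ref{P:charact.p.c.ph}, it suffices to show that $U_{ab}^G$ and $U_{ba}^G$ are ph-stable for every edge $ab$ of $G$. Since every edge of $G$ lies in a fiber of exactly one factor, I may assume without loss of generality that $a = (a_0, c)$ and $b = (b_0, c)$ with $a_0 b_0 \in E(G_0)$ and $c \in V(G_1)$. Using the Distance Property of Proposition~\ref{P:pro.Cartes.prod.}, a straightforward computation gives
$$W_{ab}^G = W_{a_0 b_0}^{G_0} \times V(G_1) \quad \text{and} \quad U_{ab}^G = U_{a_0 b_0}^{G_0} \times V(G_1).$$
Indeed, the second identity follows because any $G_1$-neighbor of a vertex $(x_0, x_1) \in W_{ab}^G$ retains the same $G_0$-coordinate and hence stays in $W_{ab}^G$, so the existence of a neighbor in $W_{ba}^G$ is equivalent to $x_0$ having a $G_0$-neighbor in $W_{b_0 a_0}^{G_0}$. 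Using the Interval Property then yields $\mathcal{I}_G(U_{ab}^G) = \mathcal{I}_{G_0}(U_{a_0 b_0}^{G_0}) \times V(G_1)$.

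With these identifications, ph-stability of $U_{ab}^G$ reduces cleanly to that of $U_{a_0 b_0}^{G_0}$: given $u = (u_0, u_1)$ and $v = (v_0, v_1)$ in $\mathcal{I}_G(U_{ab}^G)$, ph-stability of $U_{a_0 b_0}^{G_0}$ (which holds because $ph(G_0) \leq 1$ by Proposition~\ref{P:charact.p.c.ph}) supplies a $w_0 \in U_{a_0 b_0}^{G_0}$ with $v_0 \in I_{G_0}(u_0, w_0)$, and then $w := (w_0, v_1) \in U_{ab}^G$ satisfies $v \in I_G(u, w)$ by the Interval Property. The same reasoning handles $U_{ba}^G$, and the case where $ab$ lies in a $G_1$-fiber is fully symmetric, so Proposition~\ref{P:charact.p.c.ph} gives $ph(G) \leq 1$. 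I expect no serious obstacle here: the key observation is simply that $V(G_1)$ appears as a full factor of $U_{ab}^G$, which lets the witness $w$ in ph-stability retain the $G_1$-coordinate of $v$ and thus reduces everything to one factor.
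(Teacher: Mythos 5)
Your proposal is correct and follows essentially the same route as the paper: necessity via the fact that fibers are gated subgraphs together with Theorem~\ref{T:gat.subg./ph}, and sufficiency via the identification $U_{ab}^G = U_{a_0b_0}^{G_0}\times V(G_1)$, ph-stability of $U_{a_0b_0}^{G_0}$ from Proposition~\ref{P:charact.p.c.ph}, and the witness $w=(w_0,v_1)$. The only cosmetic difference is that the paper separately remarks on the degenerate case $u_0=v_0$ (where any $w_0\in U_{a_0b_0}^{G_0}$ works), which your argument covers implicitly.
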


\begin{proof}
Assume that $ph(G) \leq 1$.  Let $F_i$ be a $G_i$-fiber of $G$ for some $i = 0$ or $1$.  Then $F_i$ is a gated subgraph of $G$.  Indeed, by the Distance Property of the cartesian product, the projection onto $F_i$ of any vertex $x$ of $G$ is the gate of $x$ in $F_i$.  Therefore, by Theorem~\ref{T:gat.subg./ph}, $F_i$, and thus $G_i$, has a pre-hull number which is at most $1$.

Conversely, assume that $ph(G_i) \leq 1$ for $i = 0, 1$.  For any $x \in V(G)$, we denote by $x_0$ and $x_1$ the projections of $x$ onto $G_0$ and $G_1$, respectively, i.e. $x = (x_0,x_1)$.  Let $ab \in E(G)$.  Then $a_i = b_i$ for exactly one $i$, say $i = 1$.  We will show that $U_{ab}^G$ is ph-stable.

Clearly, for any $cd$ of $G$ is $\Theta$-equivalent to $ab$ if and only if $c_1 = d_1$ and $c_0d_0$ is $\Theta$-equivalent to $a_0b_0$.  Hence
\begin{equation}
\label{E:U'ab}
U_{ab}^G = U_{a_0b_0}^{G_0} \times V(G_1).
\end{equation}

Let $u, v \in \mathcal{I}_G(U_{ab}^G)$.  By the Interval Property of the cartesian product, $u_0, v_0 \in \mathcal{I}_{G_0}(U_{a_0b_0}^{G_0})$.  Then, because $U_{a_0b_0}^{G_0}$ is ph-stable by Proposition~\ref{P:charact.p.c.ph}, it follows that $v_0 \in I_{G_0}(u_0,w_0)$ for some $w_0 \in U_{a_0b_0}^{G_0}$.  In the case where $u_0 = v_0$, we can choose $w_0$ as any element of $U_{a_0b_0}^{G_0}$.  Let $w := (w_0,v_1)$.  Then $w \in U_{ab}^G$ by (\ref{E:U'ab}), and $v \in I_G(u,w)$ by the Distance Property of the cartesian product.  This proves that $U_{ab}^G$ is ph-stable.

In the same way we can prove that $U_{ba}^G$ is ph-stable.  Consequently $ph(G) \leq 1$ by Proposition~\ref{P:charact.p.c.ph}.
\end{proof}

From the above theorems we infer the following result:

\begin{cor}\label{C:closure}
The class of all partial cubes whose
pre-hull number is at most~$1$ is closed under gated  subgraphs, gated amalgams and cartesian products.
\end{cor}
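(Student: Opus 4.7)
The statement is essentially a repackaging of the three preceding theorems, so the plan is simply to invoke them one by one rather than to prove anything new. First I would verify that each of the three operations preserves the property of being a partial cube in its own right: gated subgraphs are convex and hence isometric, so they inherit the partial-cube structure; the gated amalgam of two partial cubes is a partial cube (a remark already made in the paragraph introducing Theorem~\ref{T:gat.amalgam}); and the cartesian product of two partial cubes is a partial cube (the remark preceding Theorem~\ref{T:cart.prod.}).

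Next, I would handle the preservation of the condition $ph \leq 1$ under each operation separately. For gated subgraphs this is exactly Theorem~\ref{T:gat.subg./ph}. For gated amalgams this is the ``if'' direction of Theorem~\ref{T:gat.amalgam}: assuming $G$ is the gated amalgam of two partial cubes $G_0, G_1$ with $ph(G_i)\leq 1$, that theorem gives $ph(G)\leq 1$. For cartesian products this is the ``if'' direction of Theorem~\ref{T:cart.prod.}: if $G = G_0 \Box G_1$ with $ph(G_i)\leq 1$ for $i=0,1$, then $ph(G)\leq 1$.

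There is no real obstacle, since all the genuine work was already done in Theorems~\ref{T:gat.subg./ph}, \ref{T:gat.amalgam} and~\ref{T:cart.prod.}. The only thing that could conceivably require a brief remark is to note that the class under consideration is indeed a class of partial cubes (not merely of bipartite graphs with $ph\leq 1$), so that the hypotheses of the three theorems apply directly; but this is immediate from Theorem~\ref{T:bip.gr.ph1=>p.c.}, which guarantees that any connected bipartite graph with $ph\leq 1$ is automatically a partial cube. Thus the proof reduces to one sentence citing the three theorems.
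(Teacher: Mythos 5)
Your proposal is correct and matches the paper exactly: the corollary is stated there with no separate argument beyond the phrase that it follows ``from the above theorems,'' i.e.\ Theorems~\ref{T:gat.subg./ph}, \ref{T:gat.amalgam} and~\ref{T:cart.prod.}, together with the remarks that each operation preserves the partial-cube property. Your additional observations (gated subgraphs are convex, hence partial cubes; Theorem~\ref{T:bip.gr.ph1=>p.c.} is available if needed) are accurate but not required beyond what the paper already implies.
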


\end{document}